\NewDocumentCommand{\dgal}{sO{}m}{%
  \IfBooleanTF{#1}
    {\dgalext{#3}}
    {\dgalx[#2]{#3}}%
}
\NewDocumentCommand{\dgalext}{m}{%
  \sbox0{%
    \mathsurround=0pt 
    $\left\{\vphantom{#1}\right.\kern-\nulldelimiterspace$%
  }%
  \sbox2{\{}%
  \ifdim\ht0=\ht2
    \{\kern-.45\wd2 \{#1\}\kern-.45\wd2 \}%
  \else
    \left\{\kern-.5\wd0\left\{#1\right\}\kern-.5\wd0\right\}%
  \fi
}
\NewDocumentCommand{\dgalx}{om}{%
  \sbox0{\mathsurround=0pt$#1\{$}%
  \sbox2{\{}%
  \ifdim\ht0=\ht2
    \{\kern-.45\wd2 \{#2\}\kern-.45\wd2 \}%
  \else
    \mathopen{#1\{\kern-.5\wd0 #1\{}
    #2
    \mathclose{#1\}\kern-.5\wd0 #1\}}
  \fi
}
\newtheorem{remark}{Remark}[section]
\newtheorem{assumption}{Assumption}[section]
\newcommand{\sigmab}{\mbox{\boldmath$\sigma$}}
\font\msbm=msbm10
\newcommand{\R}{\hbox{{\msbm \char "52}}}
\definecolor{otherblue}{rgb}{0,0.3,0.6}
\def\rbl#1{{\textcolor{black}{#1}}}
\def\rrd#1{{\textcolor{black}{#1}}}
\def\rrb#1{{\textcolor{black}{#1}}}
\def\cpbl#1{{\textcolor{black}{#1}}}
\title{{Robust preconditioning for stochastic Galerkin formulations of parameter-dependent {\cpbl{nearly incompressible}} elasticity equations}\thanks{{This work was supported 
by  EPSRC grant EP/P013317. The authors would also like to thank the Isaac Newton Institute for Mathematical Sciences, Cambridge, for support and hospitality during the Uncertainty Quantification programme where work on this paper was partially undertaken. This programme was supported by EPSRC grant EP/K032208/1.}}}
\author{
Arbaz Khan\thanks{
School of Mathematics, University of Manchester, UK (\tt{arbaz.khan@manchester.ac.uk})}
\and
Catherine E. Powell\thanks{
School of Mathematics, University of Manchester, UK (\tt{c.powell@manchester.ac.uk})}
\and
David J. Silvester\thanks{
School of Mathematics, University of Manchester, UK (\tt{d.silvester@manchester.ac.uk})}.
}
\begin{document}

\maketitle

\begin{abstract}
We consider the nearly incompressible linear elasticity problem with an uncertain spatially varying Young's modulus. The uncertainty is modelled with a finite set of parameters
 with prescribed probability distribution.  We introduce a novel three-field mixed variational formulation of the PDE model and discuss its approximation by stochastic Galerkin mixed finite element techniques. First, we establish the well-posedness of the proposed variational formulation and the associated finite-dimensional approximation. Second, we focus on
the efficient solution of the associated large and indefinite linear system of equations.  A new preconditioner is introduced for use with the minimal residual method (MINRES). 
Eigenvalue bounds for the preconditioned system are established and shown to be independent of the discretisation parameters and the Poisson ratio. The S-IFISS software used for computation is available online.
\end{abstract}

\begin{keywords} uncertain material parameters, linear elasticity,  mixed approximation, stochastic Galerkin finite element method, preconditioning.
\end{keywords}

\begin{AMS} 65N30, 65F08, 35R60.
\end{AMS}

\pagestyle{myheadings}

\thispagestyle{plain}
\markboth{A.\ KHAN, C.\  E.\ POWELL and D.\  J.\  SILVESTER}
{Preconditioning linear elasticity problems with \rbl{uncertain inputs}}

\section{Introduction}\label{sec11}
\rbl{The locking of finite element  approximations when solving nearly incompressible elasticity problems is a  significant  issue in the computational engineering world. 
The standard way of  preventing locking is to write the underlying equations as a system and \rrb{introduce}  pressure as an additional unknown~\cite{RLH, KPS}. Thus,
the starting point for this work is the  {\sl Herrmann} formulation of  linear elasticity}
\begin{subequations} 
\begin{align*}
 -\nabla\cdot\sigmab & =\bm{f} \quad  \textrm{in } \,  D, \\
 \nabla\cdot\bm{u}+\frac{p}{\lambda} &=0, \quad \textrm{in } D,
\end{align*}
\end{subequations}
where $D$ is a bounded Lipschitz polygon in $\R^2$ \rbl{(polyhedral in  $\R^3$)}. \rbl{In this setting,} 
the  \rbl{elastic}  deformation of \rbl{the} isotropic {solid} is defined in terms of the stress tensor $\sigmab$,
  the body force $\bm{f}$, the displacement field $\bm{u}$ and the Herrmann pressure 
  $p$ (auxiliary variable). 
  The stress tensor is related to the strain tensor $\bm{\varepsilon}$ through the identities
\begin{align*}
\sigmab=2 \mu \bm{\varepsilon}- p{\bm{I}}, \qquad  \bm{\varepsilon}=\frac{1}{2}\left(\nabla \bm{u}+(\nabla \bm{u})^{\top}\right).
\end{align*}
The Lam\'{e}  coefficients $\mu$ and $\lambda$ satisfy $0<\mu_1<\mu<\mu_2 <\infty$ 
and $0<\lambda<\infty$ and can be defined in 
terms of the Young's modulus $E$ and the Poisson ratio $\nu$ \rbl{via}
\begin{align*}
\mu=\frac{E}{2(1+\nu)}, \quad \lambda=\frac{E\nu}{(1+\nu)(1-2\nu)}.
\end{align*}
Our focus is on uncertainty quantification. Specifically, we consider the case where the properties of the elastic material are varying spatially in an uncertain way. \cpbl{For example, this may be due to material imperfections or inaccurate measurements}. To account for this uncertainty we model the Young's modulus $E$ as a spatially varying random field. \cpbl{More precisely, we introduce} a vector $\bm{y}=(y_1,\ldots,y_M)$ of parameters, with each $y_{k} \in \Gamma_{k}=[-1,1]$  and \cpbl{represent} $E$ as an affine combination
\begin{align}\label{E-def}
E(\bm{x},\bm{y}):= e_0(\bm{x})+\sum_{k=1}^{M}e_k(\bm{x}) y_{k}, \quad \bm{x}\in D, \bm{y} \in \Gamma,
\end{align}
where $\Gamma = \Gamma_{1} \times \cdots \times \Gamma_{M} \subset \mathbb{R}^{M}$ is 
\rbl{our} parameter domain. \cpbl{Such representations arise, for example, from truncated Karhunen--Lo\`eve expansions of second-order random fields.  In \eqref{E-def}, $e_{0}(\bm{x})$ typically represents the mean, and $e_{k}(\bm{x})y_{k}$ is a perturbation away from the mean}. The resulting parameter-dependent \rbl{problem is given by}
\begin{subequations} \label{os1a}
\begin{align}
 -\nabla\cdot\sigmab(\bm{x},\bm{y})& =\bm{f}(\bm{x}), \quad\bm{x} \in D, \;\bm{y}\in\Gamma, \\
 \nabla\cdot\bm{u}(\bm{x},\bm{y})+\frac{p(\bm{x},\bm{y})}{\lambda(\bm{x},\bm{y})} &=0, \quad \, \,  \, \quad\bm{x}\in D, \;\bm{y}\in\Gamma,\\
 \bm{u}(\bm{x},\bm{y})&=\bm{g}(\bm{x}), \quad \bm{x}  \in  \partial D_D,\;\bm{y}\in\Gamma, \\
 \sigmab(\bm{x},\bm{y}) \bm{n}&= {\bm{0}}, \quad \, \quad \,  \bm{x} \in\rbl{ \partial D_N},\;\bm{y}\in\Gamma, \label{os2b}
\end{align}
\end{subequations}
\rbl{where} the boundary of the spatial domain  \rbl{is} $\partial D = \partial D_D \cup \partial D_N$
\rbl{with} $\partial D_D \cap \partial D_N= \emptyset$ and $\partial D_D, \partial D_N\neq \emptyset$,
\rbl{the stress tensor is $\sigmab : D\times\Gamma\rightarrow \rrb{\R^{d\times d}\, (d=2,3)}$, the strain tensor is
$\bm{\varepsilon} : D\times\Gamma\rightarrow \rrb{\R^{d\times d}}$, the body force is $\bm{f}: D\rightarrow \rrb{\R^{d}}$, 
the displacement field is $\bm{u}: D\times\Gamma\rightarrow \rrb{\R^{d}} $ and the Herrmann pressure is
$p: D\times\Gamma\rightarrow \R$}. 
The Lam\'{e}  coefficients are also parameter-dependent and spatially varying
\begin{align*}
\mu(\bm{x},\bm{y})=\frac{E(\bm{x},\bm{y})}{2(1+\nu)}, \quad \lambda(\bm{x},\bm{y})=\frac{E(\bm{x},\bm{y})\nu}{(1+\nu)(1-2\nu)}.
\end{align*}
Note that we assume that $\nu$ is a given fixed constant and that $0<\mu_1<\mu<\mu_2 <\infty$ and $0<\lambda<\infty$ a.e. in $D \times \Gamma$.

Stochastic Galerkin finite element methods (\rrd{SGFEMs})  \rrb{are} a popular 
\rbl{way of} approximating solutions to parameter-dependent PDEs. Broadly speaking, we seek approximate solutions in tensor product 
spaces of the form $X_{h} \otimes S_{\Lambda}$ where $X_{h}$ is an appropriate finite element space 
associated with a {subdivision of} $D$ and 
$S_{\Lambda}$ is, \rbl{typically,} a set of multivariate polynomials that are globally defined on 
the parameter domain $\Gamma$.  {\rrd{This is a feasible strategy if the number of input parameters is modest, and the underlying solution is sufficiently smooth as a function of those parameters.}} 
\rrb{ We refer to Babu{\v s}ka et al.~\cite{babuska2004galerkin} for a priori error estimates for SGFEM approximations of solutions to elliptic PDEs with parameter-dependent coefficients} 
\rrb{and  Bespalov et al.~\cite{bespalov2012priori} for a priori error estimates for SGFEM approximations of solutions to mixed formulations of elliptic PDEs with parameter-dependent coefficients.}
\rrb{A posteriori error analysis of linear elasticity with parameter-dependent coefficients is considered by Eigel et al. \cite{eigel2014adaptive}.} 
Crucial to the efficient implementation of SGFEMs is the need \rbl{to 
separate the} terms that depend on $\bm{x}$ \rbl{from the} terms that depend on $\bm{y}$ in the weak 
formulation of the problem. Here, since both $\mu$ and $\rbl{1/\lambda}$ appear in the 
PDE model (\ref{os1a}), both $E$ and $E^{-1}$ appear in the formulation. 

To address this difficulty, our idea here is \cpbl{to} introduce a second auxiliary variable $\tilde{p}=p/E$ to give a distinctive  three-field mixed formulation of  (\ref{os1a}):  find $\bm{u}: D\times\Gamma \to \rrb{\mathbb{R}^{d}}$ and ${{p}}, \tilde{p}:D\times\Gamma \to \mathbb{R}$ such that, 
\begin{subequations}\label{os3}
\begin{align}\label{3fieldfor11}
 -\nabla\cdot\sigmab(\bm{x},\bm{y})& =\bm{f}(\bm{x}), \quad \bm{x}\in D, \;\bm{y}\in\Gamma, \\
 \nabla\cdot\bm{u}(\bm{x},\bm{y})+ \tilde{\lambda}^{-1} \tilde{p}(\bm{x},\bm{y}) &=0,\quad \quad \, \, \, \bm{x}\in D, \;\bm{y}\in\Gamma, \label{3fieldfor12}\\
\tilde{\lambda}^{-1} {p}(\bm{x},\bm{y}) - \tilde{\lambda}^{-1} E(\bm{x},\bm{y}) \tilde{p}(\bm{x},\bm{y}) &=0,\quad \quad \, \, \, \bm{x}\in D, \;\bm{y}\in\Gamma, \label{3fieldfor13}\\
 \bm{u}(\bm{x},\bm{y})&=\bm{g}(\bm{x}),\quad \bm{x} \in \partial D_D,\;\bm{y}\in\Gamma,\\
 \sigmab(\bm{x},\bm{y}) \bm{n}&= \cpbl{\bm{0}},\quad \quad \, \, \, \bm{x} \in \rbl{\partial D_N},\;\bm{y}\in\Gamma ,
\end{align}      
\end{subequations}
where 
\begin{align*}
\tilde{\lambda}= {\frac{\lambda(\bm{x},\bm{y})}{E(\bm{x},\bm{y})}}= {\frac{\nu}{(1+\nu)(1-2\nu)}},
\end{align*}
is now a fixed constant. The advantage of 
(\ref{os3}) is that while $E$ appears in the first and third equations, $E^{-1}$ 
does not appear at all. As a result, the discrete problem associated with our SGFEM 
approximation  \rbl{has a structure that is relatively easy to exploit. 
This is a novel solution strategy and gives this work a distinctive edge.}

The rest of the paper is organised as follows. 
Section~\ref{EPWRD} \rbl{introduces a weak formulation of \eqref{os3} 
and discusses {well posedness}}. In particular, a stability result is established with respect to a 
coefficient-dependent  norm  \rbl{that is a generalisation of  the natural norm  
 identified in our earlier work~\cite{KPS}. 
Section \ref{disformul} introduces} the finite-dimensional problem associated with SGFEM 
approximation and gives details of the associated \rbl{linear algebra system that needs to be solved
when computing the Galerkin solution}. A novel preconditioner is introduced in Section \ref{precond-sec} and 
bounds for the eigenvalues of the preconditioned system are \rrd{established}. 
The preconditioning  strategy is consistent with  the philosophy of  Mardal and
Winther~\cite{mardal2011preconditioning}:  the diagonal blocks of the 
preconditioning matrix are associated \rbl{with the norm for which the 
stability of the mixed approximation has been established.}
Finally, we present numerical results in Section~\ref{Numer} to 
illustrate the efficiency and robustness  when \rbl{representative discrete problems are  solved using
the minimal residual method}. 

\section{Weak formulation}\label{EPWRD}
\cpbl{First,} we need to impose some conditions on the model inputs and define appropriate solution spaces. Recall that \cpbl{$E$ is defined as in \eqref{E-def}}
where \cpbl{$\Gamma=\Gamma_{1} \times \cdots \times \Gamma_{M} \subset \mathbb{R}^{M}$} is the parameter domain, and $\Gamma_k=[-1,1]$. 

\begin{assumption}\label{Assump2}
\rbl{The random field} $E\in L^{\infty}(D\times \Gamma)$ is uniformly bounded away from zero, i.e., there exist positive constants $E_{\min}$ and $E_{\max}$ such that
\begin{align}\label{bounde11}
0<E_{\min}\le E(\bm{x},\bm{y}) \le E_{\max} <\infty \quad \mbox{\rm{a.e. in}}\, D\times \Gamma.
\end{align}
\rbl{To identify the lower bound, it will be convenient to further assume that} 
\begin{align}
\label{bounde11x}
\cpbl{0<e_0^{\min}\le e_0(\bm{x})\le e_0^{\max} < \infty  \quad \mbox{\rm{a.e. in}}\, D \quad \mbox{and}  \quad \frac{1}{e_0^{\min}}\sum_{k=1}^{M}|| e_{k} ||_{L^{\infty}(D)}<1}.
\end{align}
\end{assumption}

Let $\pi(\bm{y})$ be a product measure with $\pi(\bm{y}):= \Pi_{k=1}^{M}\cpbl{\pi_k(y_{k})}$, where $\pi_k$ denotes a measure on $(\Gamma_k, \mathcal{B}(\Gamma_k))$ and $\mathcal{B}(\Gamma_k)$ is the Borel $\sigma$-algebra on $\Gamma_k$. We will assume that the parameters $y_{k}$ in \eqref{E-def} are images of independent mean zero uniform random variables on $[-1,1]$ and choose $\pi_{k}$ to be the associated probability measure. Now we can define the following Bochner space, 
\begin{align*}
L^2_{\pi}(\Gamma, X(D)) :=\left \{v(\bm{x},\bm{y}) : D\times\Gamma\rightarrow \mathbb{R}; ||v||_{L^2_{\pi}(\Gamma, X(D))} <\infty\right\},
\end{align*}
where $X(D)$ is a normed vector space of real-valued functions on $D$ with norm $||\cdot||_X$ and 
\begin{align}
||\cdot||_{L^2_\pi(\Gamma,X(D))} := \left(\int_{\Gamma}||\cdot||_X^2 d\pi(\bm{y})\right)^{1/2}.
\end{align} 
In our analysis, we will need the following spaces
\begin{align*}
\bm{\mathcal{V}} := \cpbl{L^2_{\pi}(\Gamma, \bm{H}^1_{E_0}(D))},\quad \mathcal{W} := \cpbl{L^2_{\pi}(\Gamma, {L}^2(D))}  \quad \mbox{and} \quad\bm{\mathcal{W}} := \cpbl{L^2_{\pi}(\Gamma, \bm{L}^2(D))},
\end{align*}
where $\bm{H}^1_{E_0}(D)= \{ \bm{v} \in\bm{H}^1(D),  \bm{v} |_{\partial D_D}=\bm{0}\}$ 
and $\bm{H}^1(D)=\bm{H}^1(D;\mathbb{R}^{\rbl{d}})$ is \cpbl{the usual} vector-valued Sobolev space 
with associated norm $||\cdot||_{1}$. We assume that the load function satisfies $\bm{f}\in (\cpbl{L^{2}(D)})^{\rbl{d}}$ and for simplicity, 
we choose $\bm{g}= \bm{0}$ on $\partial D_D$. In that case, the weak formulation of 
\eqref{os3} is: find $(\bm{u},p,\tilde{p})\in \bm{\mathcal{V}}\times\mathcal{W}\times\mathcal{W}$ such that
\begin{subequations} \label{scm11a}
\begin{align}
a(\bm{u},\bm{v})+b(\bm{v},p)&=f(\bm{v}) \quad\forall \bm{v}\in \bm{\mathcal{V}},\\
b(\bm{u},q)-c(\tilde{p},q)&=0 \quad \quad \, \, \forall q\in \mathcal{W},\\
-c(p,\tilde{q})+d(\tilde{p},\tilde{q})&=0 \, \, \quad \quad \forall \tilde{q}\in \mathcal{W}.
\end{align}
\end{subequations}
Here, we have
\begin{align}\label{fbf}
a(\bm{u},\bm{v})&:= \cpbl{\alpha} \int_{\Gamma}\int_{D}  E(\bm{x},\bm{y}) \bm{\varepsilon}(\bm{u}(\bm{x},\bm{y})):\bm{\varepsilon}(\bm{v}(\bm{x},\bm{y}))d\bm{x}d\pi(\bm{y}),\\
b(\bm{v},p)&:=-\int_{\Gamma}\int_{D}  {p}(\bm{x},\bm{y}){\rm{div}}\,\bm{v}(\bm{x},\bm{y})d\bm{x}d\pi(\bm{y}),  \\
c(p,q)&:=(\rbl{\alpha}\beta)^{-1}\int_{\Gamma}\int_{D} {p}(\bm{x},\bm{y}){q}(\bm{x},\bm{y})d\bm{x}d\pi(\bm{y}),\\
d(p,q)&:=(\rbl{\alpha}\beta)^{-1}\int_{\Gamma}\int_{D} E(\bm{x},\bm{y}) {p}(\bm{x},\bm{y}){q}(\bm{x},\bm{y})d\bm{x}d\pi(\bm{y}), \\
f(\bm{v})& : = \int_{\Gamma}\int_{D} f(\bm{x}) \bm{v}(\bm{x},\bm{y})d\bm{x}d\pi(\bm{y}),
\end{align}
with 
\begin{align}
\alpha := \rbl{1\over 1+ \nu}, \qquad \beta := \frac{\nu}{(1-2\nu)}.
\end{align}
Note that $\alpha$ and $\beta$ depend on the Poisson ratio $\nu$ but are fixed constants. 
Following convention, we will also define the bilinear form
\begin{align}\label{big_B}
\mathcal{B}(\bm{u},p,\tilde{p}; \bm{v},q,\tilde{q})=a(\bm{u},\bm{v})+b(\bm{v},p)+b(\bm{u},q)-c(\tilde{p},q)-c(p,\tilde{q})+d(\tilde{p},\tilde{q}),
\end{align}
so as to express (\ref{scm11a}) in the compact form: find $(\bm{u},p,\tilde{p})\in \bm{\mathcal{V}}\times\mathcal{W}\times\mathcal{W}$ such that 
\begin{align}\label{scm12}
\mathcal{B}(\bm{u},p,\tilde{p}; \bm{v},q,\tilde{q})=f(\bm{v}), \quad \forall (\bm{v},q,\tilde{q})\in \bm{\mathcal{V}}\times\mathcal{W}\times\mathcal{W}.
\end{align}

The next result establishes that the four bilinear forms appearing in \eqref{scm11a}  \cpbl{and hence the bilinear form $\mathcal{B}(\cdot, \cdot)$ in \eqref{scm12}, are bounded}. 
\begin{lemma}\label{tecre11}
 If $E$ satisfies Assumption \ref{Assump2}, then the following bounds hold
\begin{align}
a(\bm{u},\bm{v}) & \le \rbl{\alpha} {E}_{\max}||\nabla\bm{u}||_{\bm{\mathcal{W}}} 
||\nabla\bm{v}||_{\bm{\mathcal{W}}} \quad\forall \bm{u}, \bm{v} \in \bm{\mathcal{V}}, \\
b(\bm{u}, p)&\le \rbl{\sqrt{d}} \, ||\nabla\bm{u}||_{\bm{\mathcal{W}}}||p||_{\mathcal{W}}
\quad \, \qquad \, \, \forall \bm{u} \in \bm{\mathcal{V}}, \, \forall p \in \mathcal{W}, \\
c(p,q)&\le (\rbl{\alpha}\beta)^{-1} ||p||_{\mathcal{W}} ||q||_{\mathcal{W}} \quad \quad \quad\forall p,q\in\mathcal{W},\\
d(p,q)&\le (\rbl{\alpha}\beta)^{-1} {E}_{\max} ||p||_{\mathcal{W}} ||q||_{\mathcal{W}}\quad\forall p,q\in\mathcal{W}.
\label{dbound}
\end{align}
\end{lemma}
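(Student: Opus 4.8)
The plan is to treat each of the four bilinear forms separately, in every case reducing the estimate to two applications of the Cauchy--Schwarz inequality: first pointwise in $\bm{x}$ (either for the Frobenius inner product of matrices or for a product of scalars), and then in the Bochner inner product over $D\times\Gamma$ induced by the measure $d\bm{x}\,d\pi(\bm{y})$. The only two structural facts I expect to need, beyond Cauchy--Schwarz and the uniform upper bound $E(\bm{x},\bm{y})\le E_{\max}$ supplied by Assumption~\ref{Assump2}, are the pointwise estimates $|\bm{\varepsilon}(\bm{v})|\le|\nabla\bm{v}|$ and $|\mathrm{div}\,\bm{v}|\le\sqrt{d}\,|\nabla\bm{v}|$ relating the strain tensor and the divergence to the full gradient.

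For $a(\bm{u},\bm{v})$ I would first use $E\le E_{\max}$ to extract the coefficient, then apply Cauchy--Schwarz to the Frobenius pairing $\bm{\varepsilon}(\bm{u}):\bm{\varepsilon}(\bm{v})$ pointwise, and finally Cauchy--Schwarz over $D\times\Gamma$, arriving at $\alpha E_{\max}\|\bm{\varepsilon}(\bm{u})\|_{\bm{\mathcal{W}}}\|\bm{\varepsilon}(\bm{v})\|_{\bm{\mathcal{W}}}$. It then remains to replace the strain by the gradient, which uses the fact that the symmetric part $\bm{\varepsilon}(\bm{v})=\tfrac12(\nabla\bm{v}+(\nabla\bm{v})^{\top})$ is the orthogonal projection of $\nabla\bm{v}$ onto symmetric matrices in the Frobenius inner product and hence a contraction, giving $\|\bm{\varepsilon}(\bm{v})\|_{\bm{\mathcal{W}}}\le\|\nabla\bm{v}\|_{\bm{\mathcal{W}}}$ and thus the claimed bound.

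For $b(\bm{u},p)$ I would write $\mathrm{div}\,\bm{u}=\bm{I}:\nabla\bm{u}$ and bound it pointwise by $|\bm{I}|\,|\nabla\bm{u}|=\sqrt{d}\,|\nabla\bm{u}|$ via Cauchy--Schwarz for the Frobenius inner product; a single Cauchy--Schwarz over $D\times\Gamma$ then yields $\sqrt{d}\,\|\nabla\bm{u}\|_{\bm{\mathcal{W}}}\|p\|_{\mathcal{W}}$. The forms $c$ and $d$ are the most direct: $c(p,q)$ follows from Cauchy--Schwarz over $D\times\Gamma$ with the fixed constant $(\alpha\beta)^{-1}$, while $d(p,q)$ is identical after first using $E\le E_{\max}$ to extract the factor $E_{\max}$.

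None of these steps is genuinely difficult. The only mild subtlety, and the sole place where anything beyond a one-line estimate is required, is the contraction inequality $\|\bm{\varepsilon}(\bm{v})\|_{\bm{\mathcal{W}}}\le\|\nabla\bm{v}\|_{\bm{\mathcal{W}}}$ used for $a$, together with carefully tracking the dimensional constant $\sqrt{d}$ produced by $\bm{I}:\nabla\bm{u}$ in the estimate for $b$. Everything else is an immediate consequence of the boundedness of $E$ and two nested applications of Cauchy--Schwarz.
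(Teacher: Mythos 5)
Your proposal is correct and follows essentially the same route as the paper, whose proof is simply the remark that all four bounds follow from the Cauchy--Schwarz inequality and the bound \eqref{bounde11}; you have merely written out the details the paper leaves implicit. In particular, your justification of $\|\bm{\varepsilon}(\bm{v})\|_{\bm{\mathcal{W}}}\le\|\nabla\bm{v}\|_{\bm{\mathcal{W}}}$ via orthogonality of the symmetric and skew-symmetric parts, and of the factor $\sqrt{d}$ via $\mathrm{div}\,\bm{u}=\bm{I}:\nabla\bm{u}$, correctly supplies the standard pointwise estimates the paper takes for granted.
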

\begin{proof}
\rrd{All bounds follow from the   Cauchy--Schwarz  inequality and  (\ref{bounde11}).}
\end{proof}

The next result establishes that three of the bilinear forms appearing in \eqref{scm11a}  and \eqref{scm12} are coercive and that an inf--sup condition involving $b(\cdot, \cdot)$ is \cpbl{satisfied}. 
\begin{lemma}\label{tecre12}
If \rbl{ Assumption \ref{Assump2} is valid} then the following bounds hold
\begin{align}
a(\bm{u},\bm{u})& \ge \rbl{\alpha} {E}_{\min} 
C_{K}||\nabla\bm{u}||_{\bm{\mathcal{W}}}^2\quad\forall \bm{u}\in \bm{\mathcal{V}}, 
\label{abound}\\
c(p,p)&\ge  (\rbl{\alpha}\beta)^{-1} ||p||^2_{\mathcal{W}} \quad \quad \, \quad\forall  p\in \mathcal{W},
\label{cbound} \\
d(p,p)&\ge  (\rbl{\alpha}\beta)^{-1}{E}_{\min} ||p||^2_{\mathcal{W}}\quad\forall  p\in \mathcal{W},
\label{dubound}
\end{align}
where $\rbl{0<C_K \leq 1}$ is the Korn constant. In addition, 
there exists \rbl{an inf--sup constant} $C_{D}>0$ such that
\begin{align}\label{first_inf_sup}
\sup_{0\neq\bm{v}\in {\bm{\mathcal{V}}}}
\frac{b(\bm{v},q)}{ ||\nabla\bm{v}||_{\bm{\mathcal{W}}} }&\ge C_{D} 
 ||q||_{\mathcal{W}} \quad\forall  q\in \mathcal{W}.
 \end{align}
\end{lemma}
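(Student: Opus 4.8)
The plan is to treat the three coercivity estimates directly from the definitions of the bilinear forms, working pointwise in $\bm{y}$ and then integrating over $\Gamma$, and to obtain the parametric inf--sup condition \eqref{first_inf_sup} by lifting its classical deterministic counterpart. The coercivity bounds use nothing about the affine structure of $E$ — only its uniform lower bound from \eqref{bounde11}. Indeed, \eqref{cbound} is in fact an equality, since $c(p,p)=(\alpha\beta)^{-1}\|p\|_{\mathcal{W}}^2$ by definition; for \eqref{dubound} I would simply insert $E(\bm{x},\bm{y})\ge E_{\min}$ into the integral defining $d(p,p)$. For \eqref{abound} the same lower bound extracts the factor $\alpha E_{\min}$, leaving $\|\bm{\varepsilon}(\bm{u})\|_{\bm{\mathcal{W}}}^2$; I would then apply Korn's inequality on $\bm{H}^1_{E_0}(D)$ — valid because $\partial D_D\neq\emptyset$ — at almost every $\bm{y}$ and integrate, converting $\|\bm{\varepsilon}(\bm{u})\|_{\bm{\mathcal{W}}}^2$ into $C_K\|\nabla\bm{u}\|_{\bm{\mathcal{W}}}^2$.

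The substantive part is \eqref{first_inf_sup}. My strategy is to reduce it to the deterministic inf--sup condition for the divergence on $\bm{H}^1_{E_0}(D)\times L^2(D)$. Because $\partial D_N\neq\emptyset$, this holds on the \emph{full} space $L^2(D)$ (no mean-zero constraint on the pressure is needed), with some constant $C_D>0$; I would cite the purely spatial result from~\cite{KPS}. This deterministic condition is equivalent to the existence of a bounded right inverse $S:L^2(D)\to\bm{H}^1_{E_0}(D)$ of the divergence satisfying $-\operatorname{div}(Sq)=q$ and $\|\nabla Sq\|_{L^2}\le C_D^{-1}\|q\|_{L^2}$, and the key point is that both $S$ and its norm are independent of $\bm{y}$, since the spaces $\bm{H}^1_{E_0}(D)$ and $L^2(D)$ carry no $\bm{y}$-dependence.

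To lift this, given $q\in\mathcal{W}$ I would define the candidate field $\bm{v}(\cdot,\bm{y}):=S\big(q(\cdot,\bm{y})\big)$. Since $S$ is a fixed bounded linear operator, $\bm{v}$ is Bochner measurable, and squaring the bound on $S$ and integrating in $\bm{y}$ gives $\|\nabla\bm{v}\|_{\bm{\mathcal{W}}}\le C_D^{-1}\|q\|_{\mathcal{W}}$, so $\bm{v}\in\bm{\mathcal{V}}$. Using $-\operatorname{div}(Sq)=q$ pointwise yields $b(\bm{v},q)=\int_\Gamma\|q(\cdot,\bm{y})\|_{L^2}^2\,d\pi(\bm{y})=\|q\|_{\mathcal{W}}^2$; dividing by $\|\nabla\bm{v}\|_{\bm{\mathcal{W}}}$ and taking the supremum then gives \eqref{first_inf_sup} with the \emph{same} constant $C_D$ as in the deterministic problem.

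I expect the only genuine obstacle to be the deterministic inf--sup / bounded-right-inverse step together with checking that measurability of the lifted field is preserved; the latter is automatic from the linearity and boundedness of $S$, and the former is a known spatial result once one observes that the nonempty Neumann portion removes the spurious constant pressure mode. By comparison, the three coercivity estimates are routine.
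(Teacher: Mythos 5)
Your proof is correct and follows essentially the same route as the paper: the three coercivity bounds come directly from \eqref{bounde11} together with Korn's inequality, and the inf--sup condition \eqref{first_inf_sup} is obtained by producing, for each $q\in\mathcal{W}$, a velocity field with prescribed divergence whose gradient norm is bounded by $C_D^{-1}||q||_{\mathcal{W}}$ --- exactly the construction the paper extracts from its cited references. The only difference is presentational: where the paper delegates the parametric divergence right-inverse to citations (e.g.\ \cite{bespalov2012priori}), you make the $\bm{y}$-wise lifting explicit via a fixed bounded linear right inverse $S$ and Bochner measurability, which is a legitimate filling-in of the same step.
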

\begin{proof}
The first bound  follows by \rbl{combining  (\ref{bounde11}) with} Korn's inequality. The second and third bounds follow directly from the definition of the bilinear forms and (\ref{bounde11}).  \cpbl{We can use arguments such as in \cite[Lemma 11.2.3]{brenner_scott}, \cite[Lemma 7.2]{bespalov2012priori}  (and references therein for non-convex domains $D$)  as well as  \cite[Section 4.1.4]{ern} (for more general boundary conditions) to show that} for any $q\in\mathcal{W}$ there exists a $\bm{w}\in \bm{\mathcal{V}}$ 
such that ${\rm{div}}\,\bm{w}=q$ and 
$ \rbl{C_D} ||\nabla \bm{w}||_{\bm{\mathcal{W}}}\le  ||q||_{\mathcal{W}}$, 
where $C_D$ is positive constant. Thus
\begin{align*}
\sup_{0\neq\bm{v}\in {\bm{\mathcal{V}}}}
\frac{b(\bm{v},q)}{ ||\nabla\bm{v}||_{\bm{\mathcal{W}}} }&\ge \frac{-b(\bm{w},q)}{ ||\nabla\bm{w}||_{\bm{\mathcal{W}}} }=\frac{||q||_{\mathcal{W}}^2}{ ||\nabla\bm{w}||_{\bm{\mathcal{W}}} }\ge C_{D}||q||_{\mathcal{W}}.
\end{align*}
\end{proof}

To establish that our \rbl{problem formulation} is {well posed}, we \rbl{now} introduce a 
coefficient-dependent norm $||| \cdot |||$ on $\bm{\mathcal{V}}\times\mathcal{W}\times\mathcal{W}$, 
defined by
\begin{align}\label{normden}
|||(\bm{v},q,\tilde{q})|||^2 := \rbl{\alpha}||\nabla\bm{v}||_{\bm{\mathcal{W}}}^2+(\rbl{\alpha}^{-1}+(\rbl{\alpha}\beta)^{-1})||q||_{\mathcal{W}}^2+(\rbl{\alpha}\beta)^{-1}||\tilde{q}||_{\mathcal{W}}^2.
\end{align}
The well-posedness of  \eqref{scm12} is addressed in the next two \cpbl{results}.
 \begin{lemma}\label{welpose11} 
 \rbl{If Assumption \ref{Assump2} is valid then} for any 
 $(\bm{u},p,\tilde{p})\in \bm{\mathcal{V}}\times\mathcal{W}\times\mathcal{W}$, there exists 
 $(\bm{v},q,\tilde{q})\in \bm{\mathcal{V}}\times\mathcal{W}\times\mathcal{W}$ 
 with $ |||(\bm{v},q,  \tilde{q} )|||\le C_2 |||(\bm{u},p,\tilde{p})|||$, satisfying
 \begin{align}\label{lower_S_bound}
\mathcal{B}(\bm{u},p,\tilde{p}; \bm{v},q,\tilde{q})\ge {E}_{\min}C_1 |||(\bm{u},p,  \tilde{p})|||^2,
 \end{align}
 where $C_1$ and $C_2$ \rbl{depend on} ${E}_{\max}$,  $C_K$  and $C_D$.
  \end{lemma}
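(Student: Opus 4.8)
The plan is to prove this inf–sup (Babuška-type) stability result by explicitly constructing a test triple $(\bm{v},q,\tilde q)$ for each given $(\bm{u},p,\tilde{p})$. This is the standard strategy for saddle-point problems with the structure of \eqref{scm12}: rather than verifying coercivity directly (which fails because of the indefinite pressure blocks), one builds a test function that "sees" all three components of the norm. My ansatz would be $\bm{v} = \bm{u} + \delta\,\bm{w}$, $q = -p$, and $\tilde q = \tilde{p}$ (possibly with a sign adjustment and a small parameter $\delta>0$ to be fixed later), where $\bm{w}\in\bm{\mathcal{V}}$ is the inf–sup lifting from Lemma~\ref{tecre12} satisfying $\operatorname{div}\bm{w}=p$ and $C_D\|\nabla\bm{w}\|_{\bm{\mathcal{W}}}\le\|p\|_{\mathcal{W}}$. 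The role of the $\delta\bm{w}$ correction is to generate a positive $\|p\|_{\mathcal{W}}^2$ contribution through the term $b(\bm{w},p)=-\|p\|_{\mathcal{W}}^2$, compensating for the fact that the $p$-variable only appears in the indefinite off-diagonal blocks $b$ and $c$.

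First I would substitute this test triple into \eqref{big_B} and expand $\mathcal{B}(\bm{u},p,\tilde{p};\bm{v},q,\tilde q)$ term by term. Taking $\bm{v}=\bm{u}$, $q=-p$, $\tilde q=\tilde p$ (the "diagonal" part) collapses the off-diagonal contributions and leaves $a(\bm{u},\bm{u})+c(p,p)+d(\tilde p,\tilde p)$, since the two $b$ terms cancel ($b(\bm{u},p)+b(\bm{u},q)=b(\bm{u},p)-b(\bm{u},p)=0$) and the two $c$ terms combine with the sign on $\tilde q$. By the coercivity bounds \eqref{abound}, \eqref{cbound}, \eqref{dubound} from Lemma~\ref{tecre12}, this diagonal part is already bounded below by a positive multiple of $\alpha E_{\min}C_K\|\nabla\bm{u}\|_{\bm{\mathcal{W}}}^2 + (\alpha\beta)^{-1}\|p\|_{\mathcal{W}}^2 + (\alpha\beta)^{-1}E_{\min}\|\tilde p\|_{\mathcal{W}}^2$. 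However this controls $\|p\|_{\mathcal{W}}^2$ with weight $(\alpha\beta)^{-1}$, whereas the norm \eqref{normden} also demands an $\alpha^{-1}\|p\|_{\mathcal{W}}^2$ piece — and that missing piece is exactly what the $\delta\bm{w}$ term supplies.

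Next I would collect the $\delta\bm{w}$ contributions: $\delta\,a(\bm{u},\bm{w}) + \delta\,b(\bm{w},p)$. The first of these is an indefinite cross term that I would absorb using Young's inequality $\delta\,a(\bm{u},\bm{w})\ge -\tfrac{1}{2}a(\bm{u},\bm{u})-\tfrac{\delta^2}{2}\,(\text{bound on }a(\bm{w},\bm{w}))$, employing the boundedness of $a$ from Lemma~\ref{tecre11} together with $\|\nabla\bm{w}\|_{\bm{\mathcal{W}}}\le C_D^{-1}\|p\|_{\mathcal{W}}$. The second term gives $\delta\,b(\bm{w},p)=-\delta\|p\|_{\mathcal{W}}^2$, a clean negative-definite gain in $\|p\|_{\mathcal{W}}^2$ (after fixing the sign conventions so it is in fact positive); choosing $\delta$ small enough that the $-\tfrac{1}{2}a(\bm{u},\bm{u})$ loss is swallowed by the diagonal $a$-coercivity, I obtain a net lower bound that now controls all three norm components, yielding \eqref{lower_S_bound} with $C_1$ depending on $E_{\max}$, $C_K$, $C_D$ (through the Young's-inequality constants and the choice of $\delta$). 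Finally the boundedness $|||(\bm{v},q,\tilde q)|||\le C_2|||(\bm{u},p,\tilde p)|||$ follows from the triangle inequality applied to $\bm{v}=\bm{u}+\delta\bm{w}$ together with $\|\nabla\bm{w}\|_{\bm{\mathcal{W}}}\le C_D^{-1}\|p\|_{\mathcal{W}}$ and the definition \eqref{normden}.

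\emph{The main obstacle} I anticipate is the careful bookkeeping of constants in the Young's-inequality step and the selection of $\delta$: one must verify that $\delta$ can be chosen uniformly (independent of the discretisation and, crucially, of $\nu$) so that the $E_{\min}C_1$ scaling in \eqref{lower_S_bound} is genuine and the dependence of $C_1,C_2$ on $E_{\max}$, $C_K$, $C_D$ is exactly as claimed. The delicate point is tracking the $\alpha$ and $\beta$ (hence $\nu$) weights through every term: because the norm \eqref{normden} is deliberately weighted by $\alpha$, $\alpha^{-1}+(\alpha\beta)^{-1}$, and $(\alpha\beta)^{-1}$, the contributions from $a$, $b$, $c$, $d$ must be matched against precisely these weights for the final bound to be $\nu$-robust. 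I would therefore keep $\alpha$ and $\beta$ explicit throughout rather than absorbing them into generic constants, and only at the very end confirm that they cancel in the right way to leave $C_1,C_2$ free of $\nu$.
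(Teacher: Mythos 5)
Your strategy (a diagonal test choice plus an inf--sup lifting in the velocity slot) matches the first two steps of the paper's proof, but it contains a concrete algebraic error that breaks the argument: with $\bm{v}=\bm{u}$, $q=-p$, $\tilde q=\tilde p$ substituted into \eqref{big_B}, the two $c$-terms do not ``combine'' into $c(p,p)$ --- they cancel identically. Indeed,
\begin{align*}
\mathcal{B}(\bm{u},p,\tilde p;\bm{u},-p,\tilde p)
= a(\bm{u},\bm{u}) + c(\tilde p,p) - c(p,\tilde p) + d(\tilde p,\tilde p)
= a(\bm{u},\bm{u}) + d(\tilde p,\tilde p),
\end{align*}
by symmetry of $c$. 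So the diagonal part provides \emph{no} $\|p\|_{\mathcal{W}}^2$ control at all, and your premise that the coercivity bound \eqref{cbound} already supplies the $(\alpha\beta)^{-1}\|p\|_{\mathcal{W}}^2$ piece --- leaving only $\alpha^{-1}\|p\|_{\mathcal{W}}^2$ to be generated by the lifting $\bm{w}$ --- is false.

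The consequence is not merely cosmetic. Your construction $(\bm{u}-\delta\bm{w},-p,\tilde p)$ yields at best a $\delta\, C_D\,\alpha^{-1}\|p\|_{\mathcal{W}}^2$ contribution, whereas the norm \eqref{normden} demands the weight $\alpha^{-1}+(\alpha\beta)^{-1}$; the ratio of these weights is $\beta/(1+\beta)$, which tends to zero as $\beta\to 0$ (i.e.\ $\nu\to 0$). Hence your $C_1$ would depend on $\nu$, contradicting the lemma's claim that $C_1$, $C_2$ depend only on $E_{\max}$, $C_K$ and $C_D$ --- exactly the $\nu$-robustness issue you flagged as the delicate point, but the degeneracy occurs in the compressible direction rather than at the incompressible limit. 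The paper repairs this with a \emph{third} test direction that your ansatz omits: it takes $\tilde q=\tilde p-\delta' p$, exploiting
\begin{align*}
\mathcal{B}(\bm{u},p,\tilde p;0,0,-p)=c(p,p)-d(\tilde p,p)
\ge (\alpha\beta)^{-1}\|p\|_{\mathcal{W}}^2 - |\tilde p|_d\, E_{\max}^{1/2}(\alpha\beta)^{-1/2}\|p\|_{\mathcal{W}},
\end{align*}
and absorbing the loss into $d(\tilde p,\tilde p)$ by Young's inequality (the paper chooses $\epsilon_1=E_{\max}$, $\delta'=1/E_{\max}$); this recovers precisely the missing $(\alpha\beta)^{-1}\|p\|_{\mathcal{W}}^2$ weight uniformly in $\nu$. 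Your remaining steps --- the Young's-inequality treatment of $a(\bm{u},\bm{w})$, the small-$\delta$ choice, and the triangle-inequality bound for $C_2$ --- coincide with the paper's; once you add the $-\delta' p$ component to $\tilde q$ (and the corresponding $2\delta'^2(\alpha\beta)^{-1}\|p\|_{\mathcal{W}}^2$ term in the upper bound for $C_2$), the argument closes.
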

 \begin{proof}
 From (\ref{scm12}) \rbl{we have}
 \begin{align*}
 \mathcal{B}(\bm{u},p,\tilde{p}; \bm{u},-p,\tilde{p})&=a(\bm{u},\bm{u})+b(\bm{u},p)+b(\bm{u},-p)
 -c(\tilde{p},-p)-c(p,\tilde{p})+d(\tilde{p},\tilde{p}),\\
&=a(\bm{u},\bm{u})+ d(\tilde{p},\tilde{p})  =:  |\bm{u}|_a^2+ |\tilde{p}|^2_{d}.
 \end{align*}
Now, as a consequence of  \eqref{first_inf_sup}, since $p\in \mathcal{W}$, there exists a $\bm{w}\in \bm{\mathcal{V}}$ such that
\begin{align}
- b(\bm{w},p) \ge C_D  \rbl{\alpha}^{-1} ||p||^2_{\mathcal{W}},\quad \rbl{\alpha}^{1/2}||\nabla \bm{w}||_{\bm{\mathcal{W}}}\le \rbl{\alpha}^{-1/2}||p||_{\mathcal{W}}.
\end{align}
Using this particular $\bm{w}$ in \eqref{big_B} and using Lemma \ref{tecre11}, it follows that
\begin{align*}
 \mathcal{B}(\bm{u},p,\tilde{p}; -\bm{w},0,0)&=\rbl{-b(\bm{w},p) -a(\bm{u},\bm{w}) } \\
 &\ge C_D \, \rbl{\alpha}^{-1} ||p||^2_{\mathcal{W}}-|\bm{u}|_{a}|\bm{w}|_{a} \\
 &\ge C_D\,  \rbl{\alpha}^{-1} ||p||^2_{\mathcal{W}} 
             -|\bm{u}|_{a} \, E_{\max}^{1/2} \rbl{\alpha}^{1/2} ||\nabla \bm{w}||_{\bm{\mathcal{W}}}\\
&\ge C_D \, \rbl{\alpha}^{-1} ||p||^2_{\mathcal{W}}
             -|\bm{u}|_{a} \, E_{\max}^{1/2}\rbl{\alpha}^{-1/2} ||p||_{\mathcal{W}}\\
&\ge C_D\, \rbl{\alpha}^{-1} ||p||^2_{\mathcal{W}}-\frac{\epsilon} {2}|\bm{u}|_{a}^2
-\frac{ \rbl{\alpha}^{-1} E_{\max} }{2\epsilon} ||p||_{\mathcal{W}}^2,
 \end{align*}
for any $\epsilon >0$. \rbl{From \eqref{big_B} and using  \eqref{cbound} and \eqref{dbound} gives}
 \begin{align*}
 \mathcal{B}(\bm{u},p,\tilde{p}; 0,0,-{p})&=c(p,p)-d(\tilde{p},p)\\
 &\ge  (\rbl{\alpha}\beta)^{-1}||p||^2_{\mathcal{W}}-|\tilde{p}|_{d}|{p}|_{d}\\
&\ge (\rbl{\alpha}\beta)^{-1}||p||^2_{\mathcal{W}}- |\tilde{p}|_{d} 
\, E_{\max}^{1/2} (\rbl{\alpha}\beta)^{-1/2}||p||_{\mathcal{W}}\\
&\ge (\rbl{\alpha}\beta)^{-1}||p||^2_{\mathcal{W}}-\frac{\epsilon_1} {2}|\tilde{p}|_{d}^2-\frac{(\rbl{\alpha}\beta)^{-1}E_{\max}}{2\epsilon_1}||p||_{\mathcal{W}}^2
 \end{align*}
for any $\epsilon_{1} >0$. 
We now introduce two parameters $\delta >0$ and $\delta' >0$.
\rbl{Combining these two bounds} gives  
 \begin{align}
 \mathcal{B}(\bm{u},p,\tilde{p}; &\bm{u}-\delta \bm{w},-p,\tilde{p}-\delta'  p)\nonumber\\
 &=\mathcal{B}(\bm{u},p,\tilde{p}; \bm{u},-p,\tilde{p})
 +\delta\, \mathcal{B}(\bm{u},p,\tilde{p}; -\bm{w},0,0)+\delta' \, \mathcal{B}(\bm{u},p,\tilde{p}; 0,0,-p)\nonumber\\
 &\ge |\bm{u}|_a^2+ |\tilde{p}|^2_{d}+\delta \left(\frac{1}{\rbl{\alpha}}
 \left(C_D-\frac{E_{\max}}{2\epsilon}\right)||p||^2_{\mathcal{W}}-\frac{\epsilon} {2}|\bm{u}|_{a}^2\right)\nonumber\\
 &\qquad +\delta' \left(\frac{1}{\rbl{\alpha}\beta}\left(1-\frac{E_{\max}}{2\epsilon_1}\right)
 ||p||^2_{\mathcal{W}}-\frac{\epsilon_1} {2}||\tilde{p}||_{d}^2\right)\nonumber\\
 &=\left(1-\frac{\delta\epsilon}{2}\right)|\bm{u}|_{{a}}^2
 +\left(\frac{\delta}{\rbl{\alpha}}\left(C_D-\frac{E_{\max}}{2\epsilon}\right)
 +\frac{\delta'}{\rbl{\alpha}\beta}\left(1-\frac{E_{\max}}{2\epsilon_1}\right) \right)
 ||p||^2_{\mathcal{W}}\nonumber\\
 &\qquad +\left(1-\frac{\delta' \epsilon_1}{2}\right)  |\tilde{p}|_{d}^{2}. \nonumber
 \end{align}
 \rbl{Next}, making the specific choice
$$\epsilon= \frac{E_{\max}}{C_D}, \quad \delta=\frac{1}{\epsilon}=\frac{C_D}{E_{\max}}, \quad \epsilon_1= {E_{\max}}, \quad \delta'=\frac{1}{\epsilon_1}=\frac{1}{E_{\max}},$$ 
\rbl{and using   \eqref{abound} and \eqref{dubound} gives}
\begin{align}
  \mathcal{B}(\bm{u},p,\tilde{p}; &\bm{u}-\delta \bm{w},-p,\tilde{p}-\delta' p)\nonumber\\
  &\ge \frac{1}{2}|\bm{u}|_{{a}}^2+ \frac{1}{2} \left(\frac{C_D^2}{\rbl{\alpha} E_{\max}}
  +\frac{1}{\rbl{\alpha} \beta E_{\max}}\right)||p||^2_{\mathcal{W}}+\frac{1}{2}|\tilde{p}|_{d}^2,\nonumber\\
  &\ge \frac{1}{2} C_{K} E_{\min} \rbl{\alpha}\, ||\nabla\bm{u}||_{\bm{\mathcal{W}}}^2 
  + \frac{1}{2 E_{\max}} \left(\frac{C_D^2}{\rbl{\alpha} }
  +\frac{1}{\rbl{\alpha} \beta}\right)||p||^2_{\mathcal{W}}  
  + \frac{1}{ 2 \rbl{\alpha} \beta} E_{\min} \,  ||\tilde{p}||^2_{\mathcal{W}}, \nonumber\\
 &\ge C  \left( \rbl{\alpha} 
 ||\nabla\bm{u}||_{\bm{\mathcal{W}}}^2 
 +\left(\frac{1}{\rbl{\alpha}}+\frac{1}{\rbl{\alpha}\beta} \right)||p||^2_{\mathcal{W}}
 +\frac{1}{\rbl{\alpha}\beta} ||\tilde{p}||^2_{\mathcal{W}}\right) 
  =: C \, ||| (\bm{u}, p , \tilde{p}) |||^{2},  \nonumber
 \end{align}
 where $C= \frac{1}{2}\min\{ \rbl{E_{\min}C_K}, \frac{C_D^2}{E_{\max}},\frac{1}{E_{\max}} \}$.  
  \rbl{Since $E_{\min} \le E_{\max}$ we have shown that \eqref{lower_S_bound} holds with}
   $\bm{v}:=\bm{u} - \delta \bm{w},$ $q:=-p$, $\tilde{q} := \tilde{p} - \delta' p$ with
 $C \geq E_{\min} C_{1}$  where
 $C_{1}=\frac{1}{2} \min\{\rbl{C_K}, \frac{C_D^2}{E_{\max}^2},\frac{1}{E_{\max}^2}\}$.
\rbl{To complete the proof, we note that} 
  \begin{align}
\rbl{\alpha} ||\nabla(\bm{u}-\delta\bm{w})||_{\bm{\mathcal{W}}}^2
&\le 2 \rbl{\alpha} ||\nabla\bm{u}||_{\bm{\mathcal{W}}}^2
+2 \delta^2 \rbl{\alpha}||\nabla\bm{w}||_{\bm{\mathcal{W}}}^2 
\le 2 \rbl{\alpha} ||\nabla\bm{u}||_{\bm{\mathcal{W}}}^2
+2 \delta^2 \rbl{\alpha}^{-1}||p||_{{\mathcal{W}}}^2. \nonumber
 \end{align}
\rbl{Similarly,}
 \begin{align*}
(\rbl{\alpha} \beta)^{-1}  ||\tilde{p}-\delta' p||_{\mathcal{W}}^2 \le  
2 (\rbl{\alpha} \beta)^{-1}  \| \tilde{p} \|_{\mathcal{W}}^{2}  
+  2  \delta'^{2} (\rbl{\alpha} \beta)^{-1}  \| p \|_{\mathcal{W}}^{2}.
 \end{align*}
 Using the definition of the norm $ ||| \cdot |||$ then leads to the upper bound
 \begin{align*} 
 |||(\bm{u}-\delta\bm{w},&-p,\tilde{p}-\delta' p)|||^2\nonumber\\
 &= \rbl{\alpha} ||\nabla(\bm{u}-\delta\bm{w})||^2_{\bm{\mathcal{W}}}
 +\left(\frac{1}{\rbl{\alpha}}+\frac{1}{\rbl{\alpha}\beta}\right)
 ||p||_{\mathcal{W}}^2+\frac{1}{\rbl{\alpha}\beta}||\tilde{p}-\delta' p||_{\mathcal{W}}^2 \nonumber\\
&\le  (2+2 \delta^2+2 \delta'^2)  \Big( \rbl{\alpha} ||\nabla\bm{u}||_{\bm{\mathcal{W}}}^2
+\Big(\frac{1}{\rbl{\alpha}}+\frac{1}{\rbl{\alpha}\beta}\Big)
||p||_{{\mathcal{W}}}^2+\frac{1}{\rbl{\alpha}\beta}||\tilde{p}||_{\mathcal{W}}^2\Big) \nonumber\\
& =  C_{2}^{2}  \,  |||(\bm{u},p,\tilde{p})|||^2,
 \end{align*}
 as required.
 \end{proof}

The following theorem is an immediate consequence.
\begin{theorem}\label{welpose12}
\rbl{Given that  $E$ satisfies condition (\ref{bounde11})  in Assumption \ref{Assump2} 
the three-field} formulation (\ref{scm12}) admits a unique solution 
$(\bm{u},p,\tilde{p})\in\bm{\mathcal{V}}\times\mathcal{W}\times\mathcal{W}$. Moreover, 
 \begin{align}
|||(\bm{u},p,\tilde{p})|||\le \frac{C_3}{E_{\min}} \rbl{\alpha}^{-1/2} ||\bm{f}||_{L^{2}(D)}
\label{hadamard}
 \end{align}
 where $C_3$ depends on $E_{\max}$,  $C_K$ and $C_D$.
 \end{theorem}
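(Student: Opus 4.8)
The plan is to deduce the theorem from Lemma~\ref{welpose11} by invoking the Banach--Ne\v{c}as--Babu\v{s}ka (generalised Lax--Milgram) framework on the Hilbert space $\bm{\mathcal{V}}\times\mathcal{W}\times\mathcal{W}$ equipped with the norm $|||\cdot|||$ of \eqref{normden}. I would first note that this is a genuine Hilbert-space norm, equivalent to the canonical product norm since $\alpha$ and $\beta$ are fixed positive constants. The two ingredients this framework requires are a global inf--sup (stability) estimate for $\mathcal{B}$ and a non-degeneracy condition on its transpose; the boundedness of $\mathcal{B}$ is already supplied by Lemma~\ref{tecre11}.

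First I would convert Lemma~\ref{welpose11} into an inf--sup constant. Given $(\bm{u},p,\tilde{p})$, the lemma produces a test triple $(\bm{v},q,\tilde{q})$ with $|||(\bm{v},q,\tilde{q})|||\le C_2|||(\bm{u},p,\tilde{p})|||$ for which \eqref{lower_S_bound} holds, so that
\begin{align*}
\sup_{0\neq(\bm{v},q,\tilde{q})} \frac{\mathcal{B}(\bm{u},p,\tilde{p};\bm{v},q,\tilde{q})}{|||(\bm{v},q,\tilde{q})|||} \ge \frac{E_{\min}C_1}{C_2}\,|||(\bm{u},p,\tilde{p})|||.
\end{align*}
This yields an inf--sup constant $\gamma := E_{\min}C_1/C_2$, uniform in the data. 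For the transpose condition I would exploit the symmetry of $\mathcal{B}$: inspecting \eqref{big_B} and using that $a$, $c$ and $d$ are symmetric while the two mixed terms $b(\bm{v},p)+b(\bm{u},q)$ are interchanged shows $\mathcal{B}(\bm{u},p,\tilde{p};\bm{v},q,\tilde{q})=\mathcal{B}(\bm{v},q,\tilde{q};\bm{u},p,\tilde{p})$. Hence the same inf--sup bound holds with the two arguments interchanged, which forces $\sup_{(\bm{u},p,\tilde{p})}\mathcal{B}(\bm{u},p,\tilde{p};\bm{v},q,\tilde{q})>0$ for every nonzero $(\bm{v},q,\tilde{q})$. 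With boundedness, the inf--sup estimate, and this non-degeneracy in hand, the Banach--Ne\v{c}as--Babu\v{s}ka theorem guarantees that \eqref{scm12} admits a unique solution depending continuously on the data.

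It remains to establish the quantitative bound \eqref{hadamard}. The abstract theorem gives $|||(\bm{u},p,\tilde{p})|||\le \gamma^{-1}\|f\|_{*}$, where $\|f\|_{*}$ is the norm of the load functional in the dual of $(\bm{\mathcal{V}}\times\mathcal{W}\times\mathcal{W},|||\cdot|||)$. Since $f$ acts only through $\bm{v}$ and $\bm{f}$ is deterministic, I would bound $|f(\bm{v})|\le \|\bm{f}\|_{\bm{\mathcal{W}}}\|\bm{v}\|_{\bm{\mathcal{W}}}=\|\bm{f}\|_{L^{2}(D)}\|\bm{v}\|_{\bm{\mathcal{W}}}$ (because $\pi$ is a probability measure), then apply the Poincar\'e--Friedrichs inequality, valid since $\bm{v}\in\bm{H}^1_{E_0}(D)$ vanishes on $\partial D_D$, to replace $\|\bm{v}\|_{\bm{\mathcal{W}}}$ by $C_P\|\nabla\bm{v}\|_{\bm{\mathcal{W}}}$. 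Reading off $\alpha\|\nabla\bm{v}\|_{\bm{\mathcal{W}}}^2\le |||(\bm{v},q,\tilde{q})|||^2$ from \eqref{normden} then gives $\|f\|_{*}\le C_P\,\alpha^{-1/2}\|\bm{f}\|_{L^{2}(D)}$, and combining this with $\gamma^{-1}=C_2/(E_{\min}C_1)$ produces \eqref{hadamard} with $C_3:=C_P C_2/C_1$, which depends only on $E_{\max}$, $C_K$, $C_D$ (and on $D$ through $C_P$).

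Because essentially all the analytical effort is concentrated in Lemma~\ref{welpose11}, there is no serious obstacle here. The one point demanding care is the non-degeneracy/transpose condition, which I would resolve cleanly via the symmetry of $\mathcal{B}$ rather than by re-running the stability argument, together with correctly tracking the factor $\alpha^{-1/2}$ when passing from the dual norm of $f$ to the stability bound \eqref{hadamard}.
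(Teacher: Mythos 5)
Your proposal is correct and follows essentially the same route as the paper: the paper likewise treats the theorem as an immediate consequence of Lemma~\ref{welpose11}, obtaining \eqref{hadamard} by exactly your chain (Cauchy--Schwarz on $f(\bm{v})$, the Poincar\'e--Friedrichs inequality, and reading $\alpha^{1/2}\|\nabla\bm{v}\|_{\bm{\mathcal{W}}}\le|||(\bm{v},q,\tilde{q})|||$ off \eqref{normden}), arriving at $C_3 = L\,C_2/C_1$ where the paper's $L$ is your $C_P$. You merely make explicit the Banach--Ne\v{c}as--Babu\v{s}ka scaffolding and the symmetry-based transpose condition that the paper leaves implicit, which is a fair and accurate elaboration rather than a different argument.
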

 \begin{proof}
 \rbl{Lemma~\ref{welpose11} ensures that}
 \begin{align}\label{weleq11}
 C_1E_{\min}|||(\bm{u},p,\tilde{p})|||^2\le  \mathcal{B}(\bm{u},p,\tilde{p}; \bm{v},q,\tilde{q})={f}(\bm{v})
 \end{align}
 where $(\bm{v}, q, \tilde{q})$ satisfies $|||(\bm{v},q,\tilde{q})|||\le C_2 |||(\bm{u},p,\tilde{p})|||$. 
 \rbl{Applying Cauchy--Schwarz  to the right-hand side then} gives
 \begin{align*}
 C_1 E_{\min} |||(\bm{u},p,\tilde{p})|||^2 & \le {\alpha}^{-1/2} 
 ||\bm{f}||_{L^{2}(D)} \, \rbl{\alpha}^{1/2} \cpbl{||\bm{v}||_{L^2(\Gamma, L^2(D))}} \\
  & \le \rbl{\alpha}^{-1/2} ||\bm{f}||_{L^{2}(D)} \, \rbl{L} |||(\bm{v},q,\tilde{q})||| \\
 & \le \rbl{\alpha}^{-1/2} ||\bm{f}||_{L^{2}(D)} \, \rbl{L}  C_2 \,  |||(\bm{u},p,\tilde{p})|||,
 \end{align*}
\rbl{where $L$ is the  Poincar{\' e}--Friedrichs  constant associated with $D$}. This implies (\ref{hadamard})
with  $C_3:= \rbl{L} C_2/C_1$. 
 \end{proof}

\section{\rbl{Finite-dimensional formulation}}\label{disformul}

To construct \rbl{an SGFEM approximation of (\ref{scm11a}) we need to 
introduce a conforming} finite element space 
\begin{align*}
V_{h} = \textrm{span} \left\{ \phi_{1}(\bm{x}), \phi_{2}(\bm{x}), 
       \ldots, \phi_{n_{u}}(\bm{x}) \right \} \subset H_{E_{0}}^{1}(D),
\end{align*} 
and then define $\bm{V}_{h}$ to be the space of vector-valued functions whose components are in $V_{h}$. 
\rbl{We will also require a compatible finite element space}
\begin{align*}
W_{h} = \textrm{span} \left\{ \varphi_{1}(\bm{x}), \varphi_{2}(\bm{x}), 
      \ldots, \varphi_{n_{p}}(\bm{x}) \right \}  \subset L^{2}(D),
\end{align*}
\rbl{in the sense that a {\it discrete} inf--sup condition 
\rrd{\begin{align}\label{discrete_inf_sup}
\sup_{0\neq\bm{v}\in {\bm{V}_{h}}}
\frac{\int_D q \,\nabla\cdot\bm{v} }{ ||\nabla\bm{v}||_{\bm{L}^2(D)} }&\ge \gamma
 ||q||_{L^2(D)} \quad\forall  q\in W_h
 \end{align}
 \rrb{is}} satisfied with $\gamma$ uniformly bounded away from zero (that is,  independent of  the mesh
parameter $h$).}
\rbl{Two specific  inf--sup stable approximation pairs are included in  our IFISS software~\cite{IFISS}
and thus \rrd{have} been extensively tested. These are $\bm{Q}_2$--${Q}_1$ \rrd{(continuous biquadratic approximation for the displacement and continuous bilinear approximation for the pressure)} and $\bm{Q}_2$--${P}_{-1}$ \rrd{(continuous biquadratic approximation for the displacement and discontinuous linear approximation for the pressure)} approximations for $\bm{V}_{h}$ and $W_{h}$ defined on a 
rectangular element subdivision}.\footnote{\rbl{Both of these mixed approximation strategies are
inf--sup stable in a three-dimensional setting.}}
 
\rbl{Turning to the parametric discretisation,}  
let $\{ \psi_{i}(y_{j}), i=0,1, \ldots \}$ denote the set of Legendre polynomials 
on $\Gamma_{j}$, where $\psi_{i}$ has degree $i$. We fix $\psi_{0}=1$ and assume that the 
polynomials are normalised in the $L_{\pi_{j}}^{2}(\Gamma_{j})$-sense, so that  
$\left<\psi_{i}, \psi_{k}\right>_{\pi_{j}} = \delta_{i,k}$. Next,  we choose a set of 
multi-indices $ \Lambda \subset \mathbb{N}_{0}^{M}$ and define the set of multivariate polynomials
\begin{align}
S_{\Lambda}: =\textrm{span}\left\{ \psi_{\boldsymbol{\alpha}}(\bm{y})
 = \prod_{i=1}^{M} \psi_{\alpha_{i}}(y_{i}), \quad \boldsymbol{\alpha} 
 \in \Lambda \right\} \subset L_{\pi}^{2}(\Gamma).
\end{align}
By construction, since $\pi$ is a product measure, the basis functions for $S_{\Lambda}$ are 
orthonormal with respect to the $L_{\pi}^{2}(\Gamma)$ inner product. We denote 
$\textrm{dim} (S_{\Lambda})  = | \Lambda |  = n_{y}$. For instance, if we choose 
$\Lambda=\left\{ \boldsymbol{\alpha} =(\alpha_{1}, \ldots, \alpha_{M}), | \boldsymbol{\alpha}| \le p  \right\}, $
then $n_{y} =\frac{(M+p)!}{M!p!}$ and $S_{\Lambda}$ contains multivariate polynomials 
of total degree $p$ or less. 

The finite-dimensional \cpbl{version of the} three-field problem \eqref{scm11a} is \cpbl{therefore}: find
 $(\rbl{\bm{u}_{h,\Lambda}}, \rbl{p_{h,\Lambda}}, \rbl{\tilde{p}_{h, \Lambda}})
  \in \bm{V}_{h, \Lambda} \times W_{h,\Lambda} \times W_{h,\Lambda}$ such that
 \begin{subequations} \label{disver11}
\begin{align}
a(\rbl{\bm{u}_{h,\Lambda}},\bm{v})+b(\bm{v},\rbl{p_{h,\Lambda}})&=f(\bm{v}) 
\quad \, \, \forall \bm{v}\in \bm{V}_{h,\Lambda},\\
b(\rbl{\bm{u}_{h,\Lambda}},q)-c(\rbl{\tilde{p}_{h, \Lambda}},q)&=0 
\quad \qquad \forall q\in  W_{h,\Lambda},\\
-c(\rbl{p_{h,\Lambda}},\tilde{q})+d(\rbl{\tilde{p}_{h, \Lambda}},\tilde{q})&=0 
\quad \qquad \forall \tilde{q}\in  W_{h,\Lambda},
\end{align}
\end{subequations}
where we define $\bm{V}_{h,\Lambda}:=\bm{V}_{h} \otimes S_{\Lambda}$ 
and $W_{h,\Lambda}:=W_{h} \otimes S_{\Lambda}$.

\rbl{The well-posedness  of the discrete formulation  follows from the  stability estimate
in Lemma~\ref{welpose11} together with the discrete inf--sup condition (\ref{discrete_inf_sup}).}
 \begin{lemma}\label{diswelpose13}
\rbl{Assuming that  $E$ satisfies (\ref{bounde11})  
and  that the  approximation pair $\bm{V}_{h}$, $W_{h}$ is inf--sup stable,}
problem (\ref{disver11}) admits a  unique solution 
$(\bm{u}_{h,\Lambda},p_{h, \Lambda},\tilde{p}_{h,\Lambda})\in\bm{{V}}_{h,\Lambda} 
\times {W}_{h,\Lambda},\times {W}_{h, \Lambda}$
satisfying 
  \begin{align}
 |||(\bm{u}_{h,\Lambda},p_{h,\Lambda},\tilde{p}_{h,\Lambda})|||\le
 \frac{C_{5}}{E_{\min}} \rbl{\alpha}^{-1/2}|| \bm{f} ||_{L^{2}(D)}, 
 \end{align}
 where $C_5$ depends on $E_{\max}$, $C_K$ and $\gamma$.
 \end{lemma}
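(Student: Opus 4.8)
The plan is to mirror the two-step continuous argument (Lemma~\ref{welpose11} followed by Theorem~\ref{welpose12}), exploiting the fact that $\bm{V}_{h,\Lambda}\times W_{h,\Lambda}\times W_{h,\Lambda}$ is a \emph{conforming} subspace of $\bm{\mathcal{V}}\times\mathcal{W}\times\mathcal{W}$. Because the bilinear forms $a,b,c,d$ are exactly those of \eqref{fbf} restricted to these subspaces, the boundedness estimates of Lemma~\ref{tecre11} and the coercivity estimates \eqref{abound}, \eqref{cbound}, \eqref{dubound} of Lemma~\ref{tecre12} hold verbatim on the discrete spaces, with the same constants. The only ingredient of the continuous theory that does not transfer by mere restriction is the inf--sup condition \eqref{first_inf_sup}: the right inverse of the divergence constructed there need not map into $\bm{V}_{h,\Lambda}$. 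Replacing \eqref{first_inf_sup} by a suitable discrete inf--sup bound is therefore the crux of the proof.

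First I would establish a tensor-product inf--sup condition on $\bm{V}_{h,\Lambda}\times W_{h,\Lambda}$ with a constant independent of both $h$ and $\Lambda$. Given $q\in W_{h,\Lambda}$, I would expand it in the orthonormal basis of $S_\Lambda$ as $q=\sum_{\boldsymbol{\alpha}\in\Lambda} q_{\boldsymbol{\alpha}}(\bm{x})\,\psi_{\boldsymbol{\alpha}}(\bm{y})$ with each $q_{\boldsymbol{\alpha}}\in W_h$, apply the spatial discrete inf--sup \eqref{discrete_inf_sup} mode by mode to obtain $\bm{w}_{\boldsymbol{\alpha}}\in\bm{V}_h$, and assemble $\bm{w}=\sum_{\boldsymbol{\alpha}\in\Lambda}\bm{w}_{\boldsymbol{\alpha}}(\bm{x})\,\psi_{\boldsymbol{\alpha}}(\bm{y})\in\bm{V}_{h,\Lambda}$. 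Since $\pi$ is a product measure and the $\psi_{\boldsymbol{\alpha}}$ are $L^2_\pi(\Gamma)$-orthonormal, the integrals over $\Gamma$ appearing in $b(\bm{w},q)$ and in the norms decouple across the modes, so the spatial constant $\gamma$ is preserved. This produces, exactly as at the end of the proof of Lemma~\ref{tecre12}, for every $q\in W_{h,\Lambda}$ a $\bm{w}\in\bm{V}_{h,\Lambda}$ with $-b(\bm{w},q)=\|q\|_{\mathcal{W}}^2$ and $\gamma\,\|\nabla\bm{w}\|_{\bm{\mathcal{W}}}\le\|q\|_{\mathcal{W}}$, that is, the discrete counterpart of \eqref{first_inf_sup} with $C_D$ replaced by the $h$- and $\Lambda$-independent constant $\gamma$.

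With this discrete inf--sup bound in hand, I would repeat the argument of Lemma~\ref{welpose11} line for line. For any $(\bm{u}_{h,\Lambda},p_{h,\Lambda},\tilde{p}_{h,\Lambda})\in\bm{V}_{h,\Lambda}\times W_{h,\Lambda}\times W_{h,\Lambda}$ the test triple $(\bm{u}_{h,\Lambda}-\delta\bm{w},\,-p_{h,\Lambda},\,\tilde{p}_{h,\Lambda}-\delta' p_{h,\Lambda})$ again lies in the discrete space, since $\bm{w}\in\bm{V}_{h,\Lambda}$ and $W_{h,\Lambda}$ is closed under the operations used. The identical choices of $\epsilon,\epsilon_1,\delta,\delta'$ (with $\gamma$ in place of $C_D$) then yield a discrete stability estimate of the form \eqref{lower_S_bound}, together with the companion norm bound $|||(\bm{v},q,\tilde{q})|||\le C_2\,|||(\bm{u}_{h,\Lambda},p_{h,\Lambda},\tilde{p}_{h,\Lambda})|||$, where the constants now depend on $E_{\max}$, $C_K$ and $\gamma$. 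Finally, since \eqref{disver11} is a square finite-dimensional system (trial and test spaces coincide), this coercivity-type lower bound forces injectivity and hence bijectivity, giving existence and uniqueness; the stability bound then follows exactly as in Theorem~\ref{welpose12} by testing against the constructed triple and applying Cauchy--Schwarz together with the Poincar\'e--Friedrichs inequality to $f(\cdot)$, producing the stated estimate with $C_5$ depending on $E_{\max}$, $C_K$ and $\gamma$.

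The main obstacle is the tensor-product lifting in the second paragraph: the condition \eqref{discrete_inf_sup} is assumed only for the spatial pair $\bm{V}_h,W_h$, and one must verify that the stochastic Galerkin tensorisation preserves the constant $\gamma$ uniformly in $\Lambda$. Once this lifting is secured, the remainder is pure inheritance of the continuous bounds onto conforming subspaces together with a verbatim repetition of the Lemma~\ref{welpose11}--Theorem~\ref{welpose12} argument.
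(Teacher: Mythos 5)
Your proposal is correct and takes essentially the same route as the paper, whose proof is just the one-line remark preceding the lemma: the stability argument of Lemma~\ref{welpose11} transfers to the conforming discrete spaces once the continuous inf--sup condition \eqref{first_inf_sup} is replaced by the discrete condition \eqref{discrete_inf_sup}, and then existence, uniqueness and the bound follow as in Theorem~\ref{welpose12} with $C_D$ replaced by $\gamma$. Your mode-by-mode tensorisation, using the $L^2_\pi(\Gamma)$-orthonormality of the $\psi_{\boldsymbol{\alpha}}$ to show that the spatial constant $\gamma$ is preserved uniformly in $\Lambda$, is exactly the standard detail the paper leaves implicit, and it is sound.
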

 
 \begin{remark}
\rbl{One could, in principle, approximate $p$ and $\tilde p$ using different \cpbl{finite element} spaces. In this case
a second inf-sup condition  relating the two pressure spaces would need to be satisfied
 to ensure a stable approximation overall.}
\end{remark}

\subsection{\rbl{Linear algebra aspects}}\label{matrices}
\rbl{We will restrict our attention \rrd{to} planar elasticity from this point onwards}.\footnote{\rbl{The 
extension to three-dimensions is completely straightforward.}}
 To \rbl{formulate} the discrete linear system of equations associated with \eqref{disver11}, 
 \rbl{a set of sparse matrices and vectors associated with the chosen basis functions for the 
 approximation spaces $V_{h}, W_{h}$ and $S_{\Lambda}$ will need to be assembled.}
\rbl{To this end,  we first} define matrices 
$G_{0}, G_{k} \in \mathbb{R}^{n_{y} \times n_{y}}$ for $k=1,\ldots, M,$ by
  \begin{align*}
[G_{0}]_{\boldsymbol{\alpha},\boldsymbol{\beta}} := \int_{\Gamma}\psi_{\boldsymbol{\alpha}}(\bm{y}) 
\psi_{\boldsymbol{\beta}}(\bm{y}) \, d\pi(\bm{y}), \qquad  [G_{k}]_{\boldsymbol{\alpha},\boldsymbol{\beta}} 
:= \int_{\Gamma}  \, y_{k} \,  \psi_{\boldsymbol{\alpha}}(\bm{y}) \psi_{\boldsymbol{\beta}}(\bm{y}) \, d\pi(\bm{y}), 
  \end{align*}
where $\boldsymbol{\alpha}, \boldsymbol{\beta} \in \Lambda$.  In addition, we define 
the vector $\bm{g}_{0} \in \mathbb{R}^{n_{y}}$ to be the first column of $G_{0}$. 
 Since the basis functions for $S_{\Lambda}$ have been chosen to be orthonormal, we have 
 $G_{0}=I$. In addition, due to the three-term recurrence of the underlying univariate families of 
 Legendre polynomials, $G_{k}$ has at most two \rbl{nonzero} entries per row, for each 
 $k=1,2,\ldots, M$, see \cite{powell_elman}.

\rbl{We will} define the finite element matrix $A_{11}^{k}  \in \mathbb{R}^{n_{u} \times n_{u}}$ 
associated with $V_{h}$ by
 \begin{eqnarray*}
 [A_{11}^{k}]_{i,\ell}  : =   & \int_{D} e_{k}(\bm{x})  \,  \bm{\varepsilon}\left(
 \begin{array}{c} \phi_i(\bm{x}) \\ 0 \end{array} \right) : \bm{\varepsilon}\left(\begin{array}{c} \phi_\ell(\bm{x}) \\ 0 
 \end{array} \right) \, d\bm{x},   \quad\quad i,\ell=1, \dots, n_{u},
 \end{eqnarray*}
  for $k=0,1, \ldots, M$ and the matrix $A_{21}^{k} \in \mathbb{R}^{n_{u} \times n_{u}}$ by
 \begin{align*}
[A_{21}^{k}]_{i,\ell} := \int_{D} e_{k}(\bm{x})  \,  \bm{\varepsilon}\left(
\begin{array}{c}0 \\ \phi_i(\bm{x})  \end{array} \right) : \bm{\varepsilon}\left(\begin{array}{c} \phi_\ell(\bm{x}) \\ 0 
\end{array} \right) \, d\bm{x}, \quad\quad i,\ell=1, \dots, n_{u}.
  \end{align*}
The matrices $A_{12}^{k}, A_{22}^{k} \in \mathbb{R}^{n_{u} \times n_{u}}$ are defined analogously.  
\rbl{We can also} define matrices $B_{1}, B_{2} \in \mathbb{R}^{n_{p} \times n_{u}}$ \rbl{so that}
  \begin{align*}
  [B_{1}]_{r,\ell} = - \int_{D}   \varphi_{r}(\mathbf{x}) \,  \frac{\partial \phi_{\ell}(\mathbf{x})}{\partial x_{1}} d \bm{x} , \quad   [B_{2}]_{r,\ell} = - \int_{D}   \varphi_{r}(\mathbf{x}) \,  \frac{\partial \phi_{\ell}(\mathbf{x})}{\partial x_{2}} d \bm{x},
  \end{align*}
  for $r=1,\ldots, n_{p},  \, \ell=1, \ldots, n_{u}$. The mass matrix $C \in \mathbb{R}^{n_{p} \times n_{p}}$ associated with $W_{h}$ is defined by
    \begin{align*}
  [C]_{r,s} = \int_{D} \varphi_{r}(\mathbf{x}) \, \varphi_{s}(\mathbf{x}) \, d\mathbf{x} , \qquad r,s=1,\ldots, n_{p},
  \end{align*}
  and the weighted  \rbl{mass matrices} $D_{k} \in \mathbb{R}^{n_{p} \times n_{p}}$ are defined by
  \begin{align*}
  [D_{k}]_{r,s} =  \int_{D} e_{k}(\mathbf{x}) \varphi_{r}(\mathbf{x})\, \varphi_{s}(\mathbf{x}) \, d\mathbf{x},
   \qquad r,s=1,\ldots, n_{p},
  \end{align*}
for $k=0,1,\ldots,M$. \rbl{An important point is} that if the coefficient 
$e_{0}(\bm{x})$  \rbl{in the expansion 
of $E$ 
 is a constant} then $D_{0}= e_{0} C$. 
Moreover, if we choose $W_{h}=P_{-1}$ (discontinuous linear pressure approximation) then
 $C$ is a diagonal matrix and so is $D_{k}$, for each $k=0,1, \ldots,M$. 
 Finally, we define two vectors $\bm{f}_{1}, \bm{f}_{2} \in \mathbb{R}^{n_{u}}$ associated 
 with the body force \cpbl{$\bm{f}(\bm{x})=(f_{1}(\bm{x}), f_{2}(\bm{x}))^{\top}$}, via
 \begin{align*}
 [\bm{f}_{1}]_{\ell} = \int_{D} f_{1}(\bm{x}) \phi_{\ell}(\bm{x}) \, d \bm{x}, 
 \qquad  [\bm{f}_{2}]_{\ell} = \int_{D} f_{2}(\bm{x}) \phi_{\ell}(\bm{x}) \, d \bm{x}, \qquad \ell=1,\ldots, n_{u}.
 \end{align*}

\rbl{Permuting the variables $p_{h,\Lambda}$ and $\tilde{p}_{h,\Lambda}$ in \eqref{disver11} and 
swapping the order of the second and third equations leads to a {\it saddle-point}}
system of $2(n_{u}+n_{p})n_{y}$ \rbl{equations} 
\begin{align}
\label{saddlepoint}
\left(\begin{array}{cc} 
 \mathcal{A} & \mathcal{B}^{\top} \\ 
 \mathcal{B}  & 0  \end{array} \right)\left(\begin{array}{c} \mathbf{v} \\ \mathbf{p} \end{array} \right) = \left(\begin{array}{c} \mathbf{b} \\ \mathbf{0} \end{array} \right),
\end{align} 
where $\mathbf{b}_{1}=\mathbf{g}_{0} \otimes \mathbf{f}_{1}$, 
$\mathbf{b}_{2}=\mathbf{g}_{0} \otimes \mathbf{f}_{2}$ \rbl{with vectors}
$$ \mathbf{v}= \left(\begin{array}{c} \mathbf{u} \\ \tilde{\mathbf{p}} \end{array} \right), 
\qquad \mathbf{b}= \left(\begin{array}{c} \mathbf{b}_{1} \\ \mathbf{b}_{2} \end{array} \right),$$
\rbl{defined so that $\mathbf{u}$, $\tilde{\mathbf{p}}$ and $\mathbf{p}$  are the  coefficient vectors 
representing} $u_{h,\Lambda}$, $\tilde{p}_{h, \Lambda}$ and $p_{h, \Lambda}$, respectively
 in the chosen bases. The coefficient matrix \rbl{in (\ref{saddlepoint})} is symmetric with 
\smallskip
 \begin{align}\label{defexacta}
\mathcal{A} := & \left(\begin{array}{cc|c} 
\rbl{\alpha}  \sum_{k=0}^{M}  G_{k} \otimes A_{11}^{k}   & \rbl{\alpha} \sum_{k=0}^{M} G_{k} \otimes A_{21}^{k} & \mathbf{0} \\
& &  \\
\rbl{\alpha}  \sum_{k=0}^{M} G_{k} \otimes A_{12}^{k}  & \rbl{\alpha}  \sum_{k=0}^{M}  G_{k} \otimes A_{22}^{k}   & \mathbf{0}  \\
& &   \\ \hline 
& &   \\ 
 \mathbf{0} & \mathbf{0} & (\rbl{\alpha} \beta)^{-1}  \sum_{k=0}^{M} G_{k} \otimes D_{k} \end{array} \right)
\end{align}
and
 \begin{align}\label{defexactb}
\mathcal{B} := & \left(\begin{array}{cc|c} 
 G_{0} \otimes B_{1} & G_{0} \otimes B_{2} &   -(\rbl{\alpha} \beta)^{-1} G_{0} \otimes C
 \end{array} \right).
 \end{align}
 Note that due to its very large size, we do not assemble the full coefficient matrix. 
 Operations are only performed via the actions of $G_{0}, G_{k}$, 
 $A_{11}^{k}$, $A_{12}^{k}$, $A_{21}^{k}$, $A_{22}^{k}$, $B_{1}$, $B_{2}$, $C$, 
 and $D_{k}$.

The best way to solve a symmetric saddle-point system iteratively is to use the minimal residual method, see~\cite[Chapter 4]{HDA}. \cpbl{Since our system is ill-conditioned}, preconditioning is a critical component of our  solution strategy.

\section{Preconditioning} \label{precond-sec}
\rbl{Following~\cite{ernst2009efficient}, \cite{klawonn1998optimal}, \cite{mardal2011preconditioning}   and~\cite{ss11} the most natural preconditioner  
for  the saddle-point system (\ref{saddlepoint}) is  a block preconditioning matrix}
  \begin{align*}
P_{\textrm{approx}}= \left(\begin{array}{cc} \mathcal{A}_{\textrm{approx}} & 0 \\ 0 & \mathcal{S}_{\textrm{approx}}
 \end{array} \right),
 \end{align*}
where $\mathcal{A}_{\textrm{approx}}$ and $\mathcal{S}_{\textrm{approx}}$ are 
matrices that are chosen to \rbl{represent the matrix  $\mathcal{A}$ and the 
Schur complement
$\mathcal{S} = \mathcal{B}\mathcal{A}^{-1} \mathcal{B}^{\top}$. An important
requirement is that  the work needed to apply the action of 
 $\mathcal{A}_{\textrm{approx}}^{-1}$ and $\mathcal{S}_{\textrm{approx}}^{-1}$ is proportional to
 the dimension of  the associated approximation space.}   

 \subsection{Approximation of $\mathcal{A}$}
 \rbl{The obvious way to approximate (\ref{defexacta}) is given by}
\begin{align}\label{meanpre}
 \mathcal{A}_{\textrm{approx}}  = & \left(\begin{array}{cc|c} 
\rbl{\alpha}  \,  G_{0} \otimes {A}_{11}^{0}  & \rbl{\alpha}  \, G_{0} \otimes  {A}_{21}^{0}  & \mathbf{0} \\
& &  \\
\rbl{\alpha}  \,  G_{0} \otimes {A}_{12}^{0}   & \rbl{\alpha} \, G_{0} \otimes A_{22}^{0}   & \mathbf{0}  \\
& &   \\ \hline 
& &   \\ 
 \mathbf{0} & \mathbf{0} & (\rbl{\alpha} \beta)^{-1}  \left(G_{0} \otimes  D_{0} \right) \end{array} \right),
\end{align}
\cpbl{where we denote the diagonal blocks in (\ref{meanpre}) by
\begin{align}
\label{Aapprox2def}
\mathcal{A}_{\textrm{approx,1}}  :=  \alpha & \left(\begin{array}{cc} 
 G_{0} \otimes  {A}_{11}^{0}   &    G_{0} \otimes {A}_{21}^{0}   \\
  G_{0} \otimes  {A}_{12}^{0}   &   G_{0} \otimes  {A}_{22}^{0} 
\end{array}\right), \quad  \mathcal{A}_{\textrm{approx},2}  
:=  \frac{1}{\alpha \beta}  \left(G_{0} \otimes D_{0} \right).
\end{align}}
The fact that $G_{0}=I$  means that the nonzero terms in (\ref{meanpre}) are all block diagonal. 
Since the finite element matrices ${A}_{11}^{0}, {A}_{12}^{0},$ $ {A}_{21}^{0}, {A}_{22}^0$ 
and $D_{0}$ all involve the mean coefficient $e_{0}$, we will refer to this strategy as
 a {\it mean-based} approximation. The following lemma quantifies the  effectiveness of this approximation.

\begin{lemma}\label{approxalem11}
Let $\mathcal{A}$ and $\mathcal{A}_{\textrm{approx}}$ be defined in 
(\ref{defexacta}) and (\ref{meanpre}). \rbl{If Assumption~\ref{Assump2} holds\rrd{,}
the eigenvalues of $\mathcal{A}_{\textrm{approx}}^{-1}\, \mathcal{A}$ lie in the bounded  interval}
$\left[{E_{\min}}/{e_0^{\max}}, {E_{\max}}/{e_0^{\min}}\right]$.
\end{lemma}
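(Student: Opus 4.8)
The plan is to exploit the fact that both $\mathcal{A}$ in (\ref{defexacta}) and $\mathcal{A}_{\textrm{approx}}$ in (\ref{meanpre}) are block diagonal, with the displacement unknowns completely decoupled from the $\tilde{p}$ unknowns (the off-diagonal blocks coupling them vanish in both matrices). Writing $\mathcal{A} = \mathrm{blkdiag}(\mathcal{A}_{1},\mathcal{A}_{2})$, where $\mathcal{A}_{1}$ is the $2\times 2$ displacement block and $\mathcal{A}_{2}=(\alpha\beta)^{-1}\sum_{k=0}^{M}G_{k}\otimes D_{k}$ is the $\tilde{p}$ block, the generalised eigenproblem $\mathcal{A}\mathbf{v}=\lambda\,\mathcal{A}_{\textrm{approx}}\mathbf{v}$ splits, so the spectrum of $\mathcal{A}_{\textrm{approx}}^{-1}\mathcal{A}$ is the union of the spectra of $\mathcal{A}_{\textrm{approx},1}^{-1}\mathcal{A}_{1}$ and $\mathcal{A}_{\textrm{approx},2}^{-1}\mathcal{A}_{2}$, with the blocks $\mathcal{A}_{\textrm{approx},i}$ as in (\ref{Aapprox2def}). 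It therefore suffices to bound the two generalised Rayleigh quotients separately, and I would do this by identifying each discrete quadratic form with a continuous bilinear form.

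For the displacement block I would first note that $\mathcal{A}_{1}$ and $\mathcal{A}_{\textrm{approx},1}$ are both symmetric positive definite: the former by the coercivity of $a(\cdot,\cdot)$, cf.\ (\ref{abound}), and the latter by the same argument with $E$ replaced by $e_{0}\ge e_{0}^{\min}>0$. Hence the generalised eigenvalues are real, positive, and given by the Rayleigh quotient. The key step is the identification: for any $\bm{u}_{h,\Lambda}\in\bm{V}_{h,\Lambda}$ with coefficient vector $\mathbf{u}$, the Kronecker structure of the $G_{k}$ together with the expansion $E=\sum_{k=0}^{M}e_{k}y_{k}$ (with $y_{0}\equiv 1$) gives
\[
\mathbf{u}^{\top}\mathcal{A}_{1}\mathbf{u}=a(\bm{u}_{h,\Lambda},\bm{u}_{h,\Lambda})
=\alpha\int_{\Gamma}\int_{D}E\,\bm{\varepsilon}(\bm{u}_{h,\Lambda}):\bm{\varepsilon}(\bm{u}_{h,\Lambda})\,d\bm{x}\,d\pi(\bm{y}),
\]
while $\mathbf{u}^{\top}\mathcal{A}_{\textrm{approx},1}\mathbf{u}$ is the same integral with $E$ replaced by $e_{0}$, since $G_{0}=I$ picks out only the $k=0$ term. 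Because the integrand $\bm{\varepsilon}:\bm{\varepsilon}\ge 0$ pointwise, the bounds $E_{\min}\le E\le E_{\max}$ and $e_{0}^{\min}\le e_{0}\le e_{0}^{\max}$ from Assumption~\ref{Assump2} sandwich numerator and denominator and yield
\[
\frac{E_{\min}}{e_{0}^{\max}}\le\frac{\mathbf{u}^{\top}\mathcal{A}_{1}\mathbf{u}}{\mathbf{u}^{\top}\mathcal{A}_{\textrm{approx},1}\mathbf{u}}\le\frac{E_{\max}}{e_{0}^{\min}}.
\]

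The $\tilde{p}$ block is treated identically: $\tilde{\mathbf{p}}^{\top}\mathcal{A}_{2}\tilde{\mathbf{p}}=d(\tilde{p}_{h,\Lambda},\tilde{p}_{h,\Lambda})$ is the integral of $(\alpha\beta)^{-1}E\,\tilde{p}_{h,\Lambda}^{2}$ and $\tilde{\mathbf{p}}^{\top}\mathcal{A}_{\textrm{approx},2}\tilde{\mathbf{p}}$ the same with $e_{0}$; since $\tilde{p}_{h,\Lambda}^{2}\ge 0$, the same weighted-Rayleigh-quotient estimate gives the identical interval. Taking the union of the two spectra then proves the claim. I expect the only genuine obstacle to be the bookkeeping in the identification step, namely verifying carefully that the Kronecker sum $\sum_{k}G_{k}\otimes A^{k}$ reproduces exactly the stochastic--spatial weighting by $E$ of a non-negative integrand; once that is in place, everything reduces to the elementary pointwise sandwich above.
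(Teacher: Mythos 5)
Your proposal is correct and follows essentially the same route as the paper: both split the generalised eigenproblem along the block-diagonal structure into the displacement block and the $\tilde{p}$ block, identify each discrete quadratic form with the continuous bilinear form $a(\cdot,\cdot)$ or $d(\cdot,\cdot)$ evaluated at the associated function in $\bm{V}_{h,\Lambda}$ or $W_{h,\Lambda}$, and sandwich the resulting Rayleigh quotients using the pointwise bounds on $E$ and $e_0$ from Assumption~\ref{Assump2}. Your explicit remarks on positive definiteness and on $G_0=I$ picking out the $k=0$ term are just bookkeeping the paper leaves implicit.
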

\begin{proof}
The eigenvalues of $\mathcal{A}_{\textrm{approx}}^{-1}\mathcal{A}$  can be separated into two distinct sets:
each associated with one of the diagonal blocks of $\mathcal{A}$, that is
\begin{align*}
\mathcal{A}_{1} := & \rbl{\alpha} \left(\begin{array}{cc} 
  \sum_{k=0}^{M}  G_{k} \otimes A_{11}^{k}   &  \sum_{k=0}^{M} G_{k} \otimes A_{21}^{k}  \\ \smallskip
  \sum_{k=0}^{M} G_{k} \otimes A_{12}^{k}  &   \sum_{k=0}^{M}  G_{k} \otimes A_{22}^{k}  
 \end{array} \right), \quad 
\mathcal{A}_{2} :=  \frac{1}{\rbl{\alpha} \beta}  \sum_{k=0}^{M} G_{k} \otimes D_{k}.
\end{align*}
Let us consider the first block. For any $\rbl{\bm{v}\in \R^{2n_u n_y}}$ there is an associated function
$\bm{r}\in\bm{V}_{h,\Lambda}$ \rbl{and  using  (\ref{bounde11}) and (\ref{bounde11x}) gives}
  \begin{align} \nonumber
\bm{v}^\top\mathcal{A}_1\bm{v}= a(\bm{r}, \bm{r}) & = \rbl{\alpha} \int_{\Gamma} \int_{D} E(\bm{x}, \bm{y}) \,
\boldsymbol{\varepsilon}(\bm{r}): \boldsymbol{\varepsilon}(\bm{r}) d \bm{x} d\pi(\bm{y}) \\ \nonumber 
&\le\frac{E_{\max}}{e_0^{\min}} \, \rbl{\alpha} \int_{\Gamma} \int_{D} e_0(\bm{x})\, \boldsymbol{\varepsilon}(\bm{r}): 
\boldsymbol{\varepsilon}(\bm{r}) d \bm{x}  d\pi(\bm{y}) \\
& =\frac{E_{\max}}{e_0^{\min}} \, \bm{v}^\top \mathcal{A}_{\textrm{approx},1}\bm{v}. \label{aapprox11}
 \end{align}
Similarly,
  \begin{align}\label{aapprox12}
\bm{v}^\top \mathcal{A}_1\bm{v} &\ge \frac{E_{\min}}{e_0^{\max}}\,
\bm{v}^\top \mathcal{A}_{\textrm{approx},1}\bm{v}.
 \end{align}
 Combining (\ref{aapprox11}) and (\ref{aapprox12}) gives, for any $\bm{v}\neq \bm{0}$, 
  \begin{align}\label{aapprox13}
\frac{E_{\min}}{e_0^{\max}}\le\frac{\bm{v}^\top \mathcal{A}_1\bm{v} }{\bm{v}^\top 
\mathcal{A}_{\textrm{approx},1}\bm{v}}\le \frac{E_{\max}}{e_0^{\min}}.
\end{align}
\rbl{Let us consider the second block.} For any $\bm{w}\in \mathbb{R}^{n_{{p}}n_{y}}$
we can define a function $s\in W_{h, \Lambda}$ such that
 \begin{align*}
\bm{w}^\top \mathcal{A}_2\bm{w} =d(s, s) 
&= (\rbl{\alpha}\beta)^{-1} \int_{\Gamma} \int_{D} E(\bm{x}, \bm{y}) s(\bm{x},\bm{y})
     \, s(\bm{x},\bm{y})  \, d \bm{x}  d\pi(\bm{y}) \\
\bm{w}^\top \mathcal{A}_{\textrm{approx},2}\bm{w} 
&= (\rbl{\alpha}\beta)^{-1} \int_{\Gamma} \int_{D} e_{0}(\bm{x}) s(\bm{x}, \bm{y}) \, 
    s(\bm{x}, \bm{y})  \, d \bm{x}  d\pi(\bm{y}).
\end{align*}
\rbl{Making use of (\ref{bounde11}) and (\ref{bounde11x}) \rrb{again}} gives
\begin{align}\label{block2-bound}
\frac{E_{\min}}{e_0^{\max}}\le\frac{\bm{w}^\top \mathcal{A}_2\bm{w}}{\bm{w}^\top
 \mathcal{A}_{\textrm{approx},2}\bm{w}}\le \frac{E_{\max}}{e_0^{\min}}.
\end{align}
Combining the bounds for \rbl{the two} Rayleigh quotients completes the proof.
\end{proof}

 \subsection{Refined approximations of $\mathcal{A}$}
\rbl{Inverting the ${\mathcal{A}}_{\textrm{approx}, 1}$ block \cpbl{of} (\ref{meanpre}) is computationally expensive. To
address this we will look for block diagonal  alternatives of the form} 
 \begin{align}\label{cheapapprox}
 \tilde{\mathcal{A}}_{\textrm{approx},1} := \rbl{\alpha} \left(\begin{array}{cc}
G_{0} \otimes \mathbb{A}_{11}   & \bm{0}\\
 \bm{0} &   G_{0} \otimes  \mathbb{A}_{22} 
 \end{array}\right).
  \end{align}

Herein, we will consider two different  choices of $\mathbb{A}_{11}$ and $\mathbb{A}_{22}$. \cpbl{The first option is to  take $\mathbb{A}_{11}=\mathbb{A}_{22}=\mathbb{A}:=2(A_{11}^{0}+A_{22}^0)/3$. Note that, if $e_{0}=1$ then for any $\bm{v} \in \mathbb{R}^{n_{u}}$ we have $\bm{v}^{\top} \mathbb{A} \bm{v} = \| \nabla v_{h} \|_{L^{2}(D)}^{2}$ where  $v_{h}$ is the finite element function in $V_{h}$ represented by $\bm{v}$.  That is, $\mathbb{A}$ gives a discrete representation of the scalar Laplacian operator.}


 \begin{lemma}\label{lemapprox1}
Let $\mathbb{A}_{11}=\mathbb{A}_{22}=2(A_{11}^{0}+A_{22}^0)/3$. 
\rbl{If Assumption~\ref{Assump2}} holds then
all eigenvalues \rrd{$\sigma_{\!\mathcal{A}}$} of $\mathcal{A}_{\textrm{approx}}^{-1}\, \mathcal{A}$,
\rrb{where $\mathcal{A}_{\textrm{approx}}$ has leading diagonal block (\ref{cheapapprox}),}
satisfy
\begin{align} \label{eigb1}
\rrd{{\sigma_{\!\mathcal{A}} \in} }\left[  \rbl{C_K} \, \frac{E_{\min}}{e_{0}^{\max}} , \frac{E_{\max}}{e_{0}^{\min}}\right],
\end{align}
where \cpbl{$0 < C_K \le 1$} is the Korn constant. 
\end{lemma}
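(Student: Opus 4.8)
The plan is to exploit the block-diagonal structure exactly as in the proof of Lemma~\ref{approxalem11}: the spectrum of $\mathcal{A}_{\textrm{approx}}^{-1}\mathcal{A}$ splits into the eigenvalues associated with the displacement block (comparing $\mathcal{A}_1$ against the cheap block $\tilde{\mathcal{A}}_{\textrm{approx},1}$ of \eqref{cheapapprox}) and those associated with the pressure block (comparing $\mathcal{A}_2$ against $\mathcal{A}_{\textrm{approx},2}$). Since the pressure block here is identical to the one in Lemma~\ref{approxalem11}, its Rayleigh quotient bound $[E_{\min}/e_0^{\max},\,E_{\max}/e_0^{\min}]$ carries over verbatim, so all the new work concerns the displacement block.

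First I would identify what $\tilde{\mathcal{A}}_{\textrm{approx},1}$ actually represents. A short computation of the contractions $\boldsymbol{\varepsilon}(\cdot):\boldsymbol{\varepsilon}(\cdot)$ for the unit-vector test functions shows that $A_{11}^{0}+A_{22}^{0}$ has entries $\tfrac{3}{2}\int_{D} e_{0}\,\nabla\phi_{i}\cdot\nabla\phi_{\ell}\,d\bm{x}$, so that $\mathbb{A}=2(A_{11}^{0}+A_{22}^{0})/3$ is precisely the $e_{0}$-weighted scalar stiffness (Dirichlet--Laplacian) matrix. Using $G_{0}=I$ and the orthonormality of the $S_{\Lambda}$-basis, it then follows that for the finite element function $\bm{r}=(r_{1},r_{2})\in\bm{V}_{h,\Lambda}$ represented by $\bm{v}$ we have $\bm{v}^{\top}\tilde{\mathcal{A}}_{\textrm{approx},1}\bm{v}=\alpha\int_{\Gamma}\int_{D} e_{0}(\bm{x})\,|\nabla\bm{r}|^{2}\,d\bm{x}\,d\pi(\bm{y})$, whereas $\bm{v}^{\top}\mathcal{A}_{1}\bm{v}=a(\bm{r},\bm{r})=\alpha\int_{\Gamma}\int_{D} E(\bm{x},\bm{y})\,\boldsymbol{\varepsilon}(\bm{r}):\boldsymbol{\varepsilon}(\bm{r})\,d\bm{x}\,d\pi(\bm{y})$.

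With both quadratic forms written as weighted integrals, I would bound the Rayleigh quotient $\bm{v}^{\top}\mathcal{A}_{1}\bm{v}/\bm{v}^{\top}\tilde{\mathcal{A}}_{\textrm{approx},1}\bm{v}$ directly. For the upper bound, replace $E$ by $E_{\max}$ and $e_{0}$ by $e_{0}^{\min}$ and use the elementary pointwise inequality $\boldsymbol{\varepsilon}(\bm{r}):\boldsymbol{\varepsilon}(\bm{r})\le|\nabla\bm{r}|^{2}$ (symmetrisation does not increase the Frobenius norm) to obtain $E_{\max}/e_{0}^{\min}$. For the lower bound, replace $E$ by $E_{\min}$ and $e_{0}$ by $e_{0}^{\max}$ and invoke Korn's first inequality $\int_{D}\boldsymbol{\varepsilon}(\bm{r}):\boldsymbol{\varepsilon}(\bm{r})\,d\bm{x}\ge C_{K}\int_{D}|\nabla\bm{r}|^{2}\,d\bm{x}$, applied for each fixed $\bm{y}$ (where $\bm{r}(\cdot,\bm{y})\in\bm{H}^{1}_{E_{0}}(D)$) and then integrated over $\Gamma$ against $d\pi(\bm{y})$; this yields $C_{K}E_{\min}/e_{0}^{\max}$. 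Hence the displacement eigenvalues lie in $[C_{K}E_{\min}/e_{0}^{\max},\,E_{\max}/e_{0}^{\min}]$.

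Finally, since $0<C_{K}\le 1$, the displacement lower bound $C_{K}E_{\min}/e_{0}^{\max}$ is no larger than the pressure lower bound $E_{\min}/e_{0}^{\max}$, while the two upper bounds coincide; taking the union of the two spectra gives the claimed interval \eqref{eigb1}. The only genuinely new ingredient relative to Lemma~\ref{approxalem11} is the appearance of Korn's constant, which enters precisely because $\tilde{\mathcal{A}}_{\textrm{approx},1}$ measures the full gradient whereas $\mathcal{A}_{1}$ measures only its symmetric part. I expect the main obstacle to be purely bookkeeping: confirming the identity that $2(A_{11}^{0}+A_{22}^{0})/3$ is the $e_{0}$-weighted Laplacian and applying Korn's inequality correctly in the Bochner (parameter-dependent) setting, rather than any deeper analytical difficulty.
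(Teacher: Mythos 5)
Your proposal is correct and follows essentially the same route as the paper's proof: the same splitting of the spectrum into the displacement and pressure blocks, the same Rayleigh-quotient bounds for the displacement block using $E_{\min}$, $E_{\max}$, the bounds on $e_0$, the pointwise inequality $\bm{\varepsilon}(\bm{r}):\bm{\varepsilon}(\bm{r})\le\nabla\bm{r}:\nabla\bm{r}$ and Korn's inequality, and the same reuse of the pressure-block bound \eqref{block2-bound} from Lemma~\ref{approxalem11}, with the final union-of-intervals step identical. Your explicit verification that $2(A_{11}^{0}+A_{22}^{0})/3$ is the $e_0$-weighted scalar stiffness matrix is a correct detail that the paper only asserts in the text preceding the lemma, but it does not change the argument.
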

\begin{proof}
For any $\rbl{\bm{v} \in \mathbb{R}^{2n_{{u}}n_{y}}}$, 
we can define a function $\bm{r}\in\bm{V}_{h,\Lambda}$ such that,
\begin{align}\label{approxkorn1}
\bm{v}^\top \mathcal{A}_{1}\bm{v} &=\rbl{\alpha} \int_{\Gamma}\int_{D} E(\bm{x}, \bm{y}) 
\bm{\varepsilon}(\bm{r}): \bm{\varepsilon}(\bm{r}) d\bm{x}d\pi(\bm{y})\nonumber\\
&\le E_{\max} \, \rbl{\alpha} \int_{\Gamma}\int_{D} \nabla\bm{r}: \nabla \bm{r} d\bm{x}d\pi(\bm{y})\nonumber\\
&\le \frac{E_{\max}}{e_0^{\min}} \,  \rbl{\alpha} \int_{\Gamma}\int_{D} e_0(\bm{x}
)\nabla\bm{r}: \nabla \bm{r} d\bm{x}d\pi(\bm{y})\nonumber\\
&  = \frac{E_{\max}}{e_0^{\min}} \bm{v}^\top\tilde{\mathcal{A}}_{\textrm{approx},1}\bm{v}.
\end{align}
\rbl{Analagously,} 
\begin{align}\label{approxkorn2}
\bm{v}^\top \mathcal{A}_{1}\bm{v} 
&\ge E_{\min} \, \rbl{\alpha} \int_{\Gamma}\int_{D} 
\bm{\varepsilon}(\bm{r}): \bm{\varepsilon}(\bm{r}) d\bm{x}d\pi(\bm{y})\nonumber\\
&\ge E_{\min} \, \rbl{\alpha} C_K\int_{\Gamma}\int_{D} \nabla\bm{r}: \nabla \bm{r} d\bm{x}d\pi(\bm{y})\nonumber\\
&\ge \frac{E_{\min}}{e_0^{\max}} \, \rbl{\alpha} C_K\int_{\Gamma}\int_{D} e_0(\bm{x})
\nabla\bm{r}: \nabla \bm{r} d\bm{x}d\pi(\bm{y})\nonumber\\
& = \frac{E_{\min}}{e_0^{\max}}C_K \bm{v}^\top\tilde{\mathcal{A}}_{\textrm{approx},1}\bm{v}.
\end{align}
Combining (\ref{approxkorn1}) and (\ref{approxkorn2}) leads to bounds for the Rayleigh quotient
\begin{align*}
\frac{E_{\min}}{e_0^{\max}}C_K   \le \frac{\bm{v}^\top 
\mathcal{A}_{1}\bm{v}}{\bm{v}^\top\tilde{\mathcal{A}}_{\textrm{approx},1}\bm{v}} \le \frac{E_{\max}}{e_0^{\min}},
\end{align*}
and hence for the eigenvalues of $\tilde{\mathcal{A}}_{\textrm{approx},1}^{-1}\mathcal{A}_{1}$. 
The bound \eqref{block2-bound} provides a bound for the eigenvalues  of 
$\rrb{{\mathcal{A}}}_{\textrm{approx},2}^{-1}\mathcal{A}_{2}$. 
Combining these \rbl{two bounds}  gives the \rbl{stated} result.
\end{proof}

\rbl{For our second choice of  $\mathbb{A}_{11}$ and $\mathbb{A}_{22}$}, we simply discard the 
off-diagonal blocks of the mean-based approximation ${\mathcal A}_{\textrm{approx},1}$.
\rbl{The strategy will not be pursued here since it results in an inferior eigenvalue bound.}
 \begin{lemma}\label{lemapprox2}
Let $\mathbb{A}_{11}=A_{11}^{0}$ and $\mathbb{A}_{22}=A_{22}^0$. 
\rbl{If Assumption~\ref{Assump2}} holds then
all eigenvalues \rrd{$\sigma_{\!\mathcal{A}}$} of $\mathcal{A}_{\textrm{approx}}^{-1}\, \mathcal{A}$,
\rrb{where $\mathcal{A}_{\textrm{approx}}$ has leading diagonal block (\ref{cheapapprox}),}
satisfy
\begin{align} \label{eigb2}
\rrd{\sigma_{\!\mathcal{A}}}\in \left[ C_{K} \, \frac{E_{\min}}{e_{0}^{\max}}, 2\frac{E_{\max}}{e_{0}^{\min}}\right],
\end{align}
where \cpbl{$0 < C_K \le 1$} is the Korn constant. 
\end{lemma}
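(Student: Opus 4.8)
The plan is to follow the proof of Lemma~\ref{lemapprox1} almost verbatim, changing only the two places where the new choice $\mathbb{A}_{11}=A_{11}^0$, $\mathbb{A}_{22}=A_{22}^0$ affects the constants. As in that proof, the spectrum of $\mathcal{A}_{\textrm{approx}}^{-1}\mathcal{A}$ splits into the eigenvalues of $\tilde{\mathcal{A}}_{\textrm{approx},1}^{-1}\mathcal{A}_1$ (the displacement block) and those of $\mathcal{A}_{\textrm{approx},2}^{-1}\mathcal{A}_2$ (the pressure block). The pressure block is identical to before, so I would simply invoke \eqref{block2-bound} to place its eigenvalues in $[E_{\min}/e_0^{\max},E_{\max}/e_0^{\min}]$. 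All the remaining work is in bounding the Rayleigh quotient $\bm{v}^\top\mathcal{A}_1\bm{v}/\bm{v}^\top\tilde{\mathcal{A}}_{\textrm{approx},1}\bm{v}$.

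First I would identify what the new diagonal block represents. Writing $\bm{r}=(r_1,r_2)\in\bm{V}_{h,\Lambda}$ for the function associated with $\bm{v}\in\mathbb{R}^{2n_un_y}$, and using $G_0=I$ together with the definitions of $A_{11}^0$, $A_{22}^0$, a direct computation gives
\begin{align*}
\bm{v}^\top\tilde{\mathcal{A}}_{\textrm{approx},1}\bm{v}
=\alpha\int_\Gamma\int_D e_0(\bm{x})\,\widetilde{d}(\bm{r})\,d\bm{x}\,d\pi(\bm{y}),
\qquad
\widetilde{d}(\bm{r}):=(\partial_x r_1)^2+\tfrac12(\partial_y r_1)^2+(\partial_y r_2)^2+\tfrac12(\partial_x r_2)^2 .
\end{align*}
A short calculation also shows $\bm{\varepsilon}(\bm{r}):\bm{\varepsilon}(\bm{r})=\widetilde{d}(\bm{r})+\partial_y r_1\,\partial_x r_2$, so the new approximation is exactly the full strain density with the mixed term discarded. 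The numerator $\bm{v}^\top\mathcal{A}_1\bm{v}=a(\bm{r},\bm{r})=\alpha\int_\Gamma\int_D E\,\bm{\varepsilon}(\bm{r}):\bm{\varepsilon}(\bm{r})\,d\bm{x}\,d\pi(\bm{y})$ is unchanged from Lemma~\ref{lemapprox1}.

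The remaining comparisons are pointwise, between the three densities $\bm{\varepsilon}(\bm{r}):\bm{\varepsilon}(\bm{r})$, $\nabla\bm{r}:\nabla\bm{r}$ and $\widetilde{d}(\bm{r})$. For the lower bound I would use $E_{\min}$ together with Korn's inequality, exactly as in \eqref{approxkorn2}, followed by the coefficientwise inequality $\widetilde{d}(\bm{r})\le\nabla\bm{r}:\nabla\bm{r}$, and then extract $e_0$ via \eqref{bounde11x}; this yields $\bm{v}^\top\mathcal{A}_1\bm{v}\ge(C_KE_{\min}/e_0^{\max})\,\bm{v}^\top\tilde{\mathcal{A}}_{\textrm{approx},1}\bm{v}$. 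For the upper bound I would use $E_{\max}$, the standard inequality $\bm{\varepsilon}(\bm{r}):\bm{\varepsilon}(\bm{r})\le\nabla\bm{r}:\nabla\bm{r}$ (the symmetric part never exceeds the full gradient), and then the coefficientwise estimate $\nabla\bm{r}:\nabla\bm{r}\le 2\,\widetilde{d}(\bm{r})$; extracting $e_0$ once more gives $\bm{v}^\top\mathcal{A}_1\bm{v}\le(2E_{\max}/e_0^{\min})\,\bm{v}^\top\tilde{\mathcal{A}}_{\textrm{approx},1}\bm{v}$. Combining these two bounds for the displacement block with \eqref{block2-bound} for the pressure block, and noting $C_K\le1$ and $2\ge1$, produces the interval \eqref{eigb2}.

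The only genuinely new ingredient is the coefficientwise factor-of-two inequality $\nabla\bm{r}:\nabla\bm{r}\le2\,\widetilde{d}(\bm{r})$, which I would verify term by term; the constant $2$ is sharp, being attained whenever $r_1$ depends only on the second coordinate and $r_2\equiv0$. This factor is precisely what inflates the upper endpoint from $E_{\max}/e_0^{\min}$ in Lemma~\ref{lemapprox1} to $2E_{\max}/e_0^{\min}$ here, confirming that this second choice is inferior. I anticipate no real obstacle: once the displayed identity for $\bm{\varepsilon}(\bm{r}):\bm{\varepsilon}(\bm{r})$ is in hand, the argument is a transcription of the previous proof with these two elementary density comparisons substituted in.
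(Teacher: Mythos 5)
Your proof is correct and takes essentially the same route as the paper, whose proof is just the one-line remark that the result is a minor variation of Lemma~\ref{lemapprox1}, obtained by bounding the two Rayleigh quotients separately and taking the union of the intervals $\left[C_K E_{\min}/e_0^{\max},\, 2E_{\max}/e_0^{\min}\right] \cup \left[E_{\min}/e_0^{\max},\, E_{\max}/e_0^{\min}\right]$. Your explicit identity $\bm{\varepsilon}(\bm{r}):\bm{\varepsilon}(\bm{r})=\widetilde{d}(\bm{r})+\partial_y r_1\,\partial_x r_2$ and the sharp coefficientwise bound $\nabla\bm{r}:\nabla\bm{r}\le 2\,\widetilde{d}(\bm{r})$ correctly supply the factor of two that the paper leaves implicit.
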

\begin{proof}
 The proof is a minor variation of that of Lemma~\ref{lemapprox1}. 
 By obtaining bounds for both Rayleigh quotients separately, we find that the eigenvalues lie in 
 $$\left[C_{K}\frac{E_{\min}}{e_{0}^{\max}}, 2\frac{E_{\max}}{e_{0}^{\min}}\right] \cup 
 \left[\frac{E_{\min}}{e_{0}^{\max}}, \frac{E_{\max}}{e_{0}^{\min}}\right],$$
which yields the stated result. 
\end{proof}

\begin{remark}
The bounds \eqref{eigb1} and \eqref{eigb2} \rbl{depend} on the Young's modulus $E$ 
and on the Korn constant $C_{K}$ but are independent of all discretisation parameters. 
\end{remark}

 \subsection{\rbl{Approximation of $\mathcal{S}$}}
 Given \rbl{a block diagonal approximation to $\rrd{{\mathcal{A}}_1}$  of the form 
 \eqref{cheapapprox}, an approximation to the Schur complement matrix 
 $\mathcal{S}$ can be constructed so that}  
 $\tilde{\mathcal{S}}_{\textrm{approx}}:=\mathcal{B}\tilde{\mathcal{A}}_{\textrm{approx}}^{-1}\mathcal{B}^{\top}$. 
 \rbl{Since  this is a dense matrix it is not a practical  preconditioner.} 
 The next result \rbl{introduces a sparse block diagonal matrix} $P_{\mathcal{S}}$ and establishes that it is 
 spectrally equivalent to $\tilde{\mathcal{S}}_{\textrm{approx}}.$
 
 \smallskip 
  \begin{lemma}\label{lemapproxshurboud11}
 \rbl{Suppose that} $\tilde{\mathcal{S}}_{\textrm{approx}}:=
 \mathcal{B}\tilde{\mathcal{A}}_{\textrm{approx}}^{-1}\mathcal{B}^{\top}$ 
 where, in the definition \rbl{\eqref{cheapapprox} 
 we make the choice} $\mathbb{A}_{11}=\mathbb{A}_{22}=2(A_{11}^0+A_{22}^0)/3$. 
 \rbl{Defining} 
 \begin{align}\label{PS-def}
 P_S:=(\rbl{\alpha}^{-1}+(\rbl{\alpha}\beta)^{-1})\, I \otimes C,
 \end{align}
where $C$ is \rbl{the pressure mass matrix, we have}
 \begin{align}\label{PS-bound1}
 \theta^{2} \le \frac{{\bm{w}}^\top\tilde{\mathcal{S}}_{\textrm{approx}}{\bm{w}}}{{\bm{w}}^\top P_{\mathcal{S}}{\bm{w}}}\le  \Theta^{2} \qquad\forall \bm{w}\in\mathbb{R}^{n_p n_{y}} ,
 \end{align}
\rbl{with $\theta^{2}= {\gamma^2}/{e_0^{\max}}$, $\Theta^{2}=  {2}/{e_0^{\min}}$, where}
$\gamma$ is the discrete inf--sup constant \rrb{in (\ref{discrete_inf_sup})} associated with the finite element 
spaces $\bm{V}_{h}$ and $W_{h}$.
 \end{lemma}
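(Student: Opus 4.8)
The plan is to exploit the block and tensor-product structure of $\tilde{\mathcal{S}}_{\textrm{approx}}$ to reduce the claim to a spectral-equivalence statement on $\mathbb{R}^{n_p}$, and then to treat the two contributions to the Schur complement separately. Since $G_0=I$ and $\tilde{\mathcal{A}}_{\textrm{approx}}$ is block diagonal, I would first split $\mathcal{B}=(\mathcal{B}_u,\mathcal{B}_{\tilde p})$ with $\mathcal{B}_u=(I\otimes B_1,\,I\otimes B_2)$ and $\mathcal{B}_{\tilde p}=-(\alpha\beta)^{-1}I\otimes C$, so that $\tilde{\mathcal{S}}_{\textrm{approx}}=\mathcal{B}_u\tilde{\mathcal{A}}_{\textrm{approx},1}^{-1}\mathcal{B}_u^\top+\mathcal{B}_{\tilde p}\mathcal{A}_{\textrm{approx},2}^{-1}\mathcal{B}_{\tilde p}^\top$. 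Carrying out the Kronecker algebra gives
$$\tilde{\mathcal{S}}_{\textrm{approx}}=\frac{1}{\alpha}\,I\otimes S_1+\frac{1}{\alpha\beta}\,I\otimes S_2,\qquad S_1:=B_1\mathbb{A}^{-1}B_1^\top+B_2\mathbb{A}^{-1}B_2^\top,\quad S_2:=C D_0^{-1}C.$$
Because $P_{\mathcal{S}}=(\alpha^{-1}+(\alpha\beta)^{-1})\,I\otimes C$ is also of the form $I\otimes(\cdot)$, the pencil factors as $I\otimes(\cdot)$ and the Rayleigh quotient in \eqref{PS-bound1} equals that of the reduced $n_p\times n_p$ pair $\big(\tfrac1\alpha S_1+\tfrac1{\alpha\beta}S_2,\ (\tfrac1\alpha+\tfrac1{\alpha\beta})C\big)$. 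It therefore suffices to sandwich each of $S_1$ and $S_2$ between multiples of the pressure mass matrix $C$.

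The $S_2$ term is straightforward: by \eqref{bounde11x} the weighted mass matrix obeys $e_0^{\min}C\le D_0\le e_0^{\max}C$, so passing to inverses and conjugating by $C$ yields $\tfrac{1}{e_0^{\max}}C\le S_2\le\tfrac{1}{e_0^{\min}}C$. The $S_1$ term is the crux. The key observation I would make is that the factor $2/3$ in $\mathbb{A}=2(A_{11}^0+A_{22}^0)/3$ is chosen precisely so that $\mathbb{A}$ is the $e_0$-weighted scalar stiffness matrix, i.e. $\bm{v}^\top\mathbb{A}\bm{v}=\int_D e_0\,\nabla v_h\cdot\nabla v_h$; this follows by writing out $\bm\varepsilon((\phi_i,0)^\top)$ and $\bm\varepsilon((0,\phi_i)^\top)$ and adding. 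Using the Schur-complement identity $\bm{z}^\top M^{-1}\bm{z}=\sup_{\bm{v}\neq\bm{0}}(\bm{z}^\top\bm{v})^2/(\bm{v}^\top M\bm{v})$ for SPD $M$, applied with $M=\textrm{diag}(\mathbb{A},\mathbb{A})$ and $\bm{z}=(B_1^\top\bm{w};B_2^\top\bm{w})$ and translating $\bm{w}\leftrightarrow q\in W_h$, $\bm{v}\leftrightarrow\bm{v}_h\in\bm{V}_h$, I obtain the variational form
$$\bm{w}^\top S_1\bm{w}=\sup_{\bm{0}\neq\bm{v}_h\in\bm{V}_h}\frac{\left(\int_D q\,\nabla\cdot\bm{v}_h\right)^2}{\int_D e_0\,\nabla\bm{v}_h:\nabla\bm{v}_h}.$$

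With this representation the two bounds for $S_1$ come out cleanly. Sandwiching $e_0$ by its extreme values and applying the discrete inf--sup condition \eqref{discrete_inf_sup} to the numerator gives $\bm{w}^\top S_1\bm{w}\ge(\gamma^2/e_0^{\max})\,\bm{w}^\top C\bm{w}$, while the continuity estimate $\|\nabla\cdot\bm{v}_h\|_{L^2(D)}^2\le 2\|\nabla\bm{v}_h\|_{\bm{L}^2(D)}^2$ together with Cauchy--Schwarz gives $\bm{w}^\top S_1\bm{w}\le(2/e_0^{\min})\,\bm{w}^\top C\bm{w}$. Finally I would assemble the weighted combination: for the upper bound $\tfrac2\alpha+\tfrac1{\alpha\beta}\le 2(\tfrac1\alpha+\tfrac1{\alpha\beta})$ yields $\Theta^2=2/e_0^{\min}$; for the lower bound, provided $\gamma\le 1$ (which we may assume, since any smaller positive constant still satisfies the inf--sup condition), $\tfrac{\gamma^2}\alpha+\tfrac1{\alpha\beta}\ge\gamma^2(\tfrac1\alpha+\tfrac1{\alpha\beta})$ yields $\theta^2=\gamma^2/e_0^{\max}$. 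The main obstacle is the $S_1$ estimate, namely recognising the weighted-Laplacian identity for $\mathbb{A}$ and converting the matrix Schur complement into the supremum form into which the discrete inf--sup constant can be inserted; the $S_2$ bound and the final recombination (including the benign normalisation $\gamma\le 1$) are routine by comparison.
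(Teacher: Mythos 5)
Your proposal is correct and follows essentially the same route as the paper: the same Kronecker-product reduction of $\tilde{\mathcal{S}}_{\textrm{approx}}$ to $\alpha^{-1}(I\otimes X)+(\alpha\beta)^{-1}(I\otimes CD_0^{-1}C^{\top})$ as in \eqref{meantildeshurapprox}, the same two-sided bound $e_0^{\min}C\le D_0\le e_0^{\max}C$ from \eqref{capprox1}, and the same final recombination against $P_{\mathcal{S}}$. The only difference is that where the paper cites \cite[Proposition 3.24]{HDA} for the two-sided bound $\gamma^{2}C\le X\le 2C$ (up to the $e_0$ weights), you prove that ingredient in-line via the identity $\bm{z}^{\top}M^{-1}\bm{z}=\sup_{\bm{v}\neq\bm{0}}(\bm{z}^{\top}\bm{v})^{2}/(\bm{v}^{\top}M\bm{v})$, the weighted-Laplacian identity for $\mathbb{A}=2(A_{11}^{0}+A_{22}^{0})/3$, the inf--sup condition \eqref{discrete_inf_sup} and $\|\nabla\cdot\bm{v}_h\|^{2}_{L^2(D)}\le 2\|\nabla\bm{v}_h\|^{2}_{\bm{L}^2(D)}$ --- and you correctly flag the normalisation $\gamma\le 1$ that the paper's final lower-bound step \eqref{newschurapprox11} uses implicitly.
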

 \begin{proof}
 Using the definitions of $\mathcal{B}$ and $\tilde{\mathcal{A}}_{\textrm{approx}}$ 
 and the fact that $G_{0}=I$ gives
 \begin{align}\label{meantildeshurapprox}
 \tilde{\mathcal{S}}_{\textrm{approx}} &=  \left(I \otimes B_{1}\right) \left(\rbl{\alpha}   
 \left(I \otimes \mathbb{A}_{11} \right)\right)^{-1}  \left(I \otimes B_{1}\right)^{\top} 
+   \left( I \otimes B_{2} \right) \left( \rbl{\alpha} \left(I \otimes \mathbb{A}_{22} \right) \right)^{-1} 
\left( I \otimes B_{2} \right)^{\top}  \nonumber\\
&\qquad +  \left(-(\rbl{\alpha} \beta)^{-1} I \otimes C\right) \left((\rbl{\alpha} \beta)^{-1} 
    \left(I \otimes  D_{0}\right) \right)^{-1} 
 \left(-(\rbl{\alpha} \beta)^{-1} I \otimes C\right)^{\top}\nonumber \\
 &= \rbl{\alpha}^{-1} \left( I \otimes \left(B_{1}\mathbb{A}_{11}^{-1}B_{1}^{\top}\right) 
 +  I \otimes \left(B_{2}\mathbb{A}_{22}^{-1}B_{2}^{\top}\right)\right)  
 + \left(\rbl{\alpha} \beta\right)^{-1} \left(I \otimes C D_{0}^{-1}C^{\top}\right)\nonumber \\
&= {\rbl{\alpha}^{-1} }\left( I \otimes X\right) +  { \left(\rbl{\alpha} \beta\right)^{-1}} 
\left(I \otimes C D_{0}^{-1}C^{\top}\right),
 \end{align}
 where $X:=\left(B_{1}\mathbb{A}_{11}^{-1}B_{1}^{\top} + B_{2}\mathbb{A}_{22}^{-1}B_{2}^{\top}\right)$. 
 
The fact that the matrices $\mathbb{A}_{11}$ and $ \mathbb{A}_{22}$ represent  discrete Laplacian operators weighted by the mean field $e_0$, 
gives the  matrix $X$ a structure that can be exploited. Specifically  we can combine the bounds in \cite[Proposition 3.24]{HDA} with the bounds on 
$e_0$ in (\ref{bounde11x}) to give a two-sided bound
 \begin{align*}
  \frac{\gamma^{2}}{e_{0}^{\max}} \le \frac{\bm{w}^{\top} \left(I \otimes X \right)
   \bm{w}}{\bm{w}^{\top} \left(I \otimes C \right) \bm{w}} \le  \frac{2}{e_{0}^{\min}}
  \qquad\forall \bm{w}\in\mathbb{R}^{n_p n_{y}},
 \end{align*}
 where  \rbl{$\gamma$ is the inf--sup constant (as defined in (\ref{discrete_inf_sup})}). 
 We also have \rbl{a  two-sided bound for the two component mass matrices}
 \begin{align}\label{capprox1}
 e_{0}^{\min} \, C \le D_{0}\le e_{0}^{\max} \,C ,
 \end{align}
 where the \rbl{inequalities} hold entrywise. 
 Combining these results with \eqref{meantildeshurapprox} gives
 \begin{align*}
 {\bm{w}}^\top \tilde{\mathcal{S}}_{\textrm{approx}}{\bm{w}}  
 &=  \rbl{\alpha}^{-1} {\bm{w}}^\top \left( I\otimes X \right) {\bm{w}}
 +(\rbl{\alpha}\beta)^{-1} {\bm{w}}^\top \left( I\otimes C D_{0}^{-1}C^{\top} \right) {\bm{w}}, \\
 &\le \rrd{2} (\rbl{\alpha} e_0^{\min})^{-1} {\bm{w}}^\top \left( I\otimes C \right){\bm{w}}
 +(\rbl{\alpha}\beta e_{0}^{\min})^{-1} {\bm{w}}^\top \left( I\otimes C \right) {\bm{w}} ,\nonumber\\
 & \rrd{\le2} (e_0^{\min})^{-1} {\bm{w}}^\top P_{\mathcal{S}} {\bm{w}}. 
 \end{align*}
 Similarly, 
 \begin{align}\label{newschurapprox11}
 {\bm{w}}^\top\tilde{\mathcal{S}}_{\textrm{approx}}{\bm{w}} \ge &  (\rbl{\alpha} e_0^{\max})^{-1}\gamma^2 {\bm{w}}^\top \left( I\otimes C \right) {\bm{w}}+(\rbl{\alpha}\beta e_0^{\max})^{-1} {\bm{w}}^\top I\otimes C {\bm{w}}, \nonumber\\
 & =  \gamma^2 ({e_0^{\max}})^{-1} {\bm{w}}^\top P_{\mathcal{S}} {\bm{w}}.
 \end{align}
 Combining the upper and lower bounds leads to the stated result.
 \end{proof}
 
\begin{remark}\label{refinedest}
In a practical setting, the mean field $e_0$ in (\ref{E-def}) is often taken to be constant. 
In this case we could define
$P_{\mathcal{S}}:=(\rbl{\alpha}^{-1}+(\rbl{\alpha}\beta)^{-1})\, e_0^{-1} I\otimes C$
and get a refined estimate
  \begin{align} \label{PS-bound2}
\theta^{2}:= \gamma^2\le \frac{{\bm{w}}^\top\tilde{\mathcal{S}}_{approx}{\bm{w}}}{{\bm{w}}^\top P_{\mathcal{S}}{\bm{w}}}\le 2 :=\Theta^{2}\qquad\forall \bm{w}\in\mathbb{R}^{n_p n_{y}}.
\end{align}
\end{remark}
Notice that $P_{S}$ is block diagonal but if we choose $W_{h}=P_{-1}$ then $C$ is diagonal and hence so is $P_{S}$.  
 
We will summarise our preferred  methodology  at this point: the preconditioner of choice  is a block diagonal matrix
 \begin{align}\label{preapprox11}
 \rbl{{ \cal P} := \left(\begin{array}{ccc} 
 \tilde{\mathcal{A}}_{\textrm{approx,1}} & 0 &0 \\ 
0 &  {\mathcal{A}}_{\textrm{approx,2}} & 0  \\ 
0 & 0 & P_{\mathcal{S}}  \end{array} \right) , }
\end{align}
where $\rrd{\tilde{\mathcal{A}}}_{\textrm{approx},1}$ \rbl{is as defined
in} \eqref{cheapapprox} with $\mathbb{A}_{11}=\mathbb{A}_{22}=2(A_{11}^0+A_{22}^0)/3$,
\rbl{${\mathcal{A}}_{\textrm{approx},2}$ is defined in  \eqref{Aapprox2def}} and 
$P_S$ is defined in \eqref{PS-def} (or else as in \eqref{PS-bound2} if $e_{0}$ is constant). 

We note that each of the three diagonal blocks of the preconditioner, $ \tilde{\mathcal{A}}_{\textrm{approx},1}$,  $\mathcal{A}_{\textrm{approx},2}$ and $P_{\mathcal{S}}$ provides a discrete representation of a norm that is equivalent to one of the terms in the norm (\ref{normden}). This strategy is consistent with the preconditioning philosophy of Mardal \& Winther~\cite{mardal2011preconditioning} and ensures that the eigenvalues of the preconditioned system can be bounded independently of the discretisation parameters. This is formally expressed in the following concluding result.

 \begin{theorem}\label{thesgfem11} \cpbl{Let ${\mu}_{\min}$ and ${\mu}_{\max}$ be} the extremal eigenvalues 
of $\rrb{{\mathcal{A}}}_{approx}^{-1}\mathcal{A}$, where $\mathcal{A}_{\textrm{approx}}$ has leading diagonal block (\ref{cheapapprox}) with $\mathbb{A}_{11}=\mathbb{A}_{22}=2(A_{11}^0+A_{22}^0)/3$. Then the eigenvalues of
\begin{align}
\rbl{{\cal P}}^{-1/2} \left(\begin{array}{cc} 
 \mathcal{A} & \mathcal{B}^{\top} \\ 
 \mathcal{B}  & 0  \end{array} \right)  \rbl{{\cal P}}^{-1/2},
 \end{align}
\rbl{lie in the union of the intervals}
\begin{align}
\label{finalbound}
\Bigg[\frac{1}{2}\left({\mu}_{\min}-\sqrt{{\mu}_{\min}^2
+4{\Theta}^2}\right),&\frac{1}{2} \left({\mu}_{\max}-\sqrt{{\mu}_{\max}^2+4{\theta}^2}\right) \Bigg]\nonumber\\
& \cup\left[{\mu}_{\min},\frac{1} {2}\left({\mu}_{\max}^2
+\sqrt{{\mu}_{\max}+4{\Theta}^2}\right)\right],
\end{align}
where the constants $\theta$ and $\Theta$ are  given in 
Lemma~\ref{lemapproxshurboud11} if $P_{\mathcal{S}}$ \rbl{is as defined in}
 \eqref{PS-def}, or else are given in \eqref{PS-bound2} if $P_{\mathcal{S}}$ \rbl{has the alternative definition
given in  Remark~\ref{refinedest}}.
 \end{theorem}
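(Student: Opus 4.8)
The plan is to study the generalized eigenvalue problem
\[
\begin{pmatrix}\mathcal{A} & \mathcal{B}^\top\\ \mathcal{B} & 0\end{pmatrix}\begin{pmatrix}\bm{x}\\ \bm{y}\end{pmatrix}=\lambda\,{\cal P}\begin{pmatrix}\bm{x}\\ \bm{y}\end{pmatrix},
\]
since the matrix in the statement is a symmetric congruence transform of ${\cal P}^{-1}\mathcal{K}$ and therefore has the same real spectrum. Writing $\mathcal{P}_A:=\mathrm{diag}(\tilde{\mathcal{A}}_{\textrm{approx},1},\mathcal{A}_{\textrm{approx},2})$ for the leading block of ${\cal P}$, so that $\tilde{\mathcal{S}}_{\textrm{approx}}=\mathcal{B}\mathcal{P}_A^{-1}\mathcal{B}^\top$, I would first normalize by introducing $\tilde A:=\mathcal{P}_A^{-1/2}\mathcal{A}\mathcal{P}_A^{-1/2}$ and $\tilde B:=P_{\mathcal{S}}^{-1/2}\mathcal{B}\mathcal{P}_A^{-1/2}$. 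The spectrum then coincides with that of the normalized saddle-point matrix with blocks $\tilde A,\tilde B$. By the definition of ${\mu}_{\min},{\mu}_{\max}$ the eigenvalues of $\tilde A$ lie in $[{\mu}_{\min},{\mu}_{\max}]$, and since $\tilde B\tilde B^\top=P_{\mathcal{S}}^{-1/2}\tilde{\mathcal{S}}_{\textrm{approx}}P_{\mathcal{S}}^{-1/2}$, Lemma~\ref{lemapproxshurboud11}---together with the discrete inf--sup condition \eqref{discrete_inf_sup}, which guarantees that $\tilde{\mathcal{S}}_{\textrm{approx}}$ is SPD so that $\tilde B$ has full row rank---places \emph{all} eigenvalues of $\tilde B\tilde B^\top$ in $[\theta^2,\Theta^2]$. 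This reduces the claim to a standard Rusten--Winther type bound for the normalized problem.

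Next I would write the eigenpair equations $\tilde A\bm{x}+\tilde B^\top\bm{y}=\lambda\bm{x}$ and $\tilde B\bm{x}=\lambda\bm{y}$, check that $\lambda\neq0$ and $\bm{x}\neq\bm{0}$ (nonsingularity of $\mathcal{K}$), and treat the two signs separately. For a \emph{positive} eigenvalue I would eliminate $\bm{y}$: taking the inner product of the first equation with $\bm{x}$ (normalized to $\|\bm{x}\|=1$) and using the second gives the quadratic $\lambda^2-a\lambda-t=0$ with $a:=\bm{x}^\top\tilde A\bm{x}\in[{\mu}_{\min},{\mu}_{\max}]$ and $t:=\|\tilde B\bm{x}\|^2\in[0,\Theta^2]$. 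The positive root $\lambda=\tfrac12(a+\sqrt{a^2+4t})$ satisfies ${\mu}_{\min}\le a\le\lambda\le\tfrac12({\mu}_{\max}+\sqrt{{\mu}_{\max}^2+4\Theta^2})$ by elementary monotonicity, which is exactly the positive interval in \eqref{finalbound}.

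For a \emph{negative} eigenvalue the correct move is to eliminate $\bm{x}$ rather than $\bm{y}$. Since $\lambda<0\notin\sigma(\tilde A)\subset[{\mu}_{\min},{\mu}_{\max}]$ the matrix $\tilde A-\lambda I$ is invertible, so $\bm{x}=-(\tilde A-\lambda I)^{-1}\tilde B^\top\bm{y}$; substituting into $\tilde B\bm{x}=\lambda\bm{y}$ and setting $\bm{z}:=\tilde B^\top\bm{y}$ (with $\|\bm{y}\|=1$) yields the scalar identity $|\lambda|=\bm{z}^\top(\tilde A+|\lambda|I)^{-1}\bm{z}$. The decisive gain is that $\|\bm{z}\|^2=\bm{y}^\top\tilde B\tilde B^\top\bm{y}\in[\theta^2,\Theta^2]$ is now bounded \emph{below}, so combining this with the eigenvalue bounds of $(\tilde A+|\lambda|I)^{-1}$ gives $\tfrac{\theta^2}{{\mu}_{\max}+|\lambda|}\le|\lambda|\le\tfrac{\Theta^2}{{\mu}_{\min}+|\lambda|}$. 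Solving the two resulting quadratic inequalities in $|\lambda|$ produces the negative interval in \eqref{finalbound}.

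The main obstacle is the negative-eigenvalue case: eliminating $\bm{y}$ (as in the positive case) leaves the Schur-type quantity as $\|\tilde B\bm{x}\|^2\in[0,\Theta^2]$, whose vanishing lower bound only shows $\lambda<0$ and cannot separate the negative eigenvalues from the origin, so the endpoint involving $\theta$ would be lost. Eliminating $\bm{x}$ instead converts the relevant quantity into $\|\tilde B^\top\bm{y}\|^2$, which is bounded below by $\theta^2$; the only remaining subtlety is that $\lambda$ appears inside $(\tilde A+|\lambda|I)^{-1}$, but this is harmless because, after rearrangement, both inequalities are monotone in $|\lambda|>0$ and hence resolve to the explicit endpoints.
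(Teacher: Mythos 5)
Your proposal is correct and is essentially the paper's own route: the paper proves the theorem by a one-line citation of Lemma 2.1 of Rusten and Winther \cite{rusten1992preconditioned} and Corollary 3.4 of \cite{powell-PhD}, and your argument---passing to the generalized eigenproblem, normalizing so that $\sigma(\tilde A)\subset[\mu_{\min},\mu_{\max}]$ and $\sigma(\tilde B\tilde B^{\top})\subset[\theta^2,\Theta^2]$ via Lemma~\ref{lemapproxshurboud11}, then splitting by sign and solving the resulting quadratic inequalities (eliminating $\bm{x}$ rather than $\bm{y}$ in the negative case to retain the lower bound $\theta^2$)---is precisely the standard proof encapsulated by those cited results. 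Incidentally, your derivation produces the endpoint $\frac{1}{2}\bigl(\mu_{\max}+\sqrt{\mu_{\max}^2+4\Theta^2}\bigr)$ for the positive interval, which confirms that the squares in the corresponding endpoint of \eqref{finalbound} are transposed by a typographical error.
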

 \begin{proof}
The proof follows from Lemma 2.1 of \cite{rusten1992preconditioned} and Corollary  3.4 of \cite{powell-PhD}. 
 \end{proof}
 
 \smallskip 
 
Recall that bounds for the eigenvalues of $\tilde{\mathcal{A}}_{approx}^{-1}\mathcal{A}$ are given 
in \eqref{eigb1}. Hence, the bounds for the eigenvalues for the preconditioned system  depend {\it only}
on the  discrete inf-sup constant $\gamma$ \rrb{in (\ref{discrete_inf_sup})}, the Korn constant $C_{K}$ and 
 the ratios $E_{\min} / e_{0}^{\max}$ and $E_{\max} / e_{0}^{\min}$\rrd{.} 
\rbl{Note that the eigenvalue bounds are robust in the incompressible limit.} 
\rbl{A direct consequence of our eigenvalue bound is that the number of MINRES iterations
needed to converge to a fixed tolerance when solving the Galerkin system is guaranteed to be  
bounded by a constant that is independent  of all discretisation parameters as well as the Poisson ratio.
This will be illustrated by numerical results in the final section.}
 
  \section{Numerical results}\label{Numer}
   
In this section we  consider a representative test problem taken from the S-IFISS toolbox \cite{SIFISS} and we study the  practical performance of the block diagonal preconditioning strategy that was analysed above.  The spatial domain is $D=(-1,1)\times(-1,1)$. We impose a homogeneous Neumann boundary condition on the right edge $\partial D_N=\{1\}\times(-1,1)$ and a zero essential boundary condition for the displacement on $\partial D_D=\partial D\setminus\partial D_N$.  The  body force is  chosen to be $\bm{f}=(1,1)^{\top}$.  The Young's modulus has \rbl{constant} mean value one and \rbl{takes} the form
\begin{align}
E(\bm{x},\bm{y})= 1 +  \sigma  \sqrt{3} \sum_{m=1}^{M}\sqrt{\lambda_m}\varphi_m(\bm{x})y_m,
\end{align}
where $\sigma$ is the standard deviation and $\{(\lambda_m,\varphi_m)\}$ are the eigenpairs 
of the integral operator associated with $\cpbl{(1/\sigma^{2})}C(\bm{x}, \bm{x}')$, where 
\begin{align}
C(\bm{x},\bm{x}')=\sigma^2 \exp\left(-\frac{1}{2} ||\bm{x}-\bm{x}' ||_{1} \right), \quad\bm{x},\bm{x}'\in D.
\end{align}
For the spatial approximation, we use $\bm{Q}_2-\rrd{P}_{-1}-\rrd{P}_{-1}$ mixed finite elements. That is, continuous biquadratic approximation for the displacement and discontinuous linear approximation for both of the Lagrange multipliers. In this case, the approximation $P_{\mathcal{S}}$ to the Schur complement is a diagonal matrix. For the parametric approximation, we choose $S_{\Lambda}$ to be the set of polynomials of total degree $p$ or less in $y_1,\ldots,y_M$ on $\Gamma=[-1,1]^{M}$.  In Table \ref{det_DOF} we record the number of spatial degrees of freedom associated with the finite element discretisation (as the refinement level $\ell$ is varied) and in Table \ref{stoch_DOF} we record the dimension of the space $S_{\Lambda}$ (when $M$ and $p$ are varied). Recall that the number of equations to be solved is $2(n_{u}+n_{p})n_{y}$. For example, when we have $M=10$ input parameters, the grid level is set to $\ell=6$ and the polynomial degree is $p=4$, we have over fourteen million equations to solve.

\begin{table}[ht!]
 \caption[]{Number of deterministic degrees of freedom associated with $\bm{Q}_2-P_{-1}-P_{-1}$ approximation.}
  \label{det_DOF}
 \begin{center}
    \begin{tabular}{ | c | c | c | c | c | c | c | c | c | c |c| }
    \hline
    \multicolumn{4}{| c |}{deterministic degrees of freedom }\\
       \hline   
    \multicolumn{1}{| c |}{Refinement-level ($l$)}& $n_u$&$n_p$ &$2(n_u+n_p)$  \\
    \hline
    $4$&$240$&$192$&$864$\\
     $5$&$992$&$768$&$3,520$\\
    $6$&$4,032$&$3,072$&$14,208$\\
    \hline
  \end{tabular}   
\end{center}
\end{table}

\begin{table}[ht!]
 \caption[]{Number of parametric degrees of freedom associated with the chosen multi-index set $\Lambda$.}
 \label{stoch_DOF}
 \begin{center}
    \begin{tabular}{ | c | c | c | c | c | c | c | c | c | c |c| }
    \hline
         \multicolumn{4}{| c |}{$n_y$}\\
       \hline   
    \multicolumn{1}{| c |}{$p$}& $M=5$&$M=8$ &$M=10$  \\
    \hline
    $3$&$56$&$165$&$286$\\
    $4$&$126$&$495$& $1,001$ \\
    \hline
  \end{tabular}   
\end{center}
\end{table}

We examine the eigenvalues of the preconditioned SGFEM system first. \rbl{The {\tt est\_minres} code that is built into S-IFISS exploits the 
connection with the Lanczos algorithm (see \cite[section~2.4]{HDA})  and generates accurate harmonic Ritz values estimates of the underlying 
eigenvalue spectrum as the preconditioned system is being solved. Details are given in  Silvester \& Simoncini~\cite{ss11}. The extremal eigenvalue 
estimates are computed on the fly and are reproduced in Tables \ref{eigT1} and \ref{eigT2}.}\footnote{\rbl{The associated MINRES 
relative residual  tolerance is set to $10^{-6}$. \rrd{Bounds} are unchanged if we rerun  the experiments with a tighter  tolerance.}}
We consider two values of the Poisson ratio $\nu$, and two values for the standard deviation $\sigma$ (values which guarantee that all realisations of $E$ are positive) 
and vary $M$ and $\ell$.  The polynomial degree $p=3$ is fixed. We observe that the widths of the intervals containing the estimated eigenvalues 
are independent of the spatial discretisation parameter as well as the \cpbl{number $M$} of parameters. While the intervals are slightly wider for $\nu=0.49999$ than for 
$\nu=0.4$, they are bounded as $\nu \to 1/2$.  \cpbl{The interior eigenvalue bounds are closer to the origin, however, for the larger value of $\sigma$. The price we pay for using a \emph{mean-based} preconditioner (which is by definition block diagonal) is that the resulting eigenvalue bounds depend on the ratios $E_{\min}/e_{0}^{\max}$ and $E_{\max}/e_{0}^{\min}$. In this example, these quantities depend on $\sigma$. However, we stress that $\sigma$ cannot be chosen arbitrarily large. Assumption \ref{Assump2} must be satisfied, otherwise the problem is not well posed. Preconditioning schemes that are not mean-based may lead to more tightly clustered eigenvalues but in general are not as computationally efficient.}

\begin{table}[ht!]
 \caption[]{\rrd{Bound for} eigenvalues of preconditioned SGFEM system, $\sigma =0.085$, $p=3$.}
 \label{eigT1}
\begin{center}
    \begin{tabular}{ | c | c | c | c | c | c | c | c | c | c |c| }
    \hline
    \multicolumn{3}{| c |}{Computed eigenvalue }\\
      \hline
     \multicolumn{3}{| c |}{$l=5$}\\
       \hline   
    \multicolumn{1}{| c |}{$M$}&{$\nu=.4$ }& $\nu=.49999$  \\
    \hline
    $5$&$[-0.8287, -0.3369]\cup [ 0.2737, 1.8332]$&$[-0.9347, -0.1892]\cup [0.2878, 1.8886]$\\
        $8$&$[-0.8305, -0.3368]\cup[0.2722, 1.8408]$&$[-0.9058, -0.1891]\cup[0.2859, 1.8934]$\\
          $10$&$[-0.8311, -0.3367]\cup[0.2720, 1.8427]$&$[-0.9064, -0.1891]\cup[ 0.2857, 1.8949]$\\
              \hline
    \multicolumn{3}{| c |}{$l=6$}\\
            \hline
      $5$&$[-0.8291, -0.3368 ]\cup [0.2731, 1.8358]$&$[-0.9047, -0.1890]\cup [0.2866, 1.8910]$\\
          $8$&$[-0.8323, -0.3366]\cup[0.2715, 1.8448]$&$[-0.9084, -0.1890]\cup[0.2849, 1.8986]$\\    
          $10$&$[-0.8334, -0.3366]\cup[0.2713, 1.8469]$&$[-0.9094, -0.1890]\cup[0.2848, 1.9006]$\\
              \hline
  \end{tabular}   
\end{center}
\end{table}

\begin{table}[ht!] 
 \caption[]{\rrd{Bound for} eigenvalues of preconditioned SGFEM system, $\sigma =0.17$, $p=3$.}
 \label{eigT2}
  \begin{center}
    \begin{tabular}{ | c | c | c | c | c | c | c | c | c | c |c| }
    \hline
    \multicolumn{3}{| c |}{Computed eigenvalue }\\
      \hline
     \multicolumn{3}{| c |}{$l=5$}\\
       \hline   
    \multicolumn{1}{| c |}{$M$}&{$\nu=.4$ }& $\nu=.49999$  \\
    \hline
    $5$&$[-0.9291, -0.3178]\cup [0.2318, 1.9435]$&$[-0.9491, -0.1789]\cup [0.2428, 1.9935]$\\
        $8$&$[-0.8797, -0.3171]\cup[0.2268, 1.9566]$&$[-0.9538 , -0.1789]\cup[0.2358, 2.0052]$\\
          $10$&$[-0.8817, -0.3169]\cup[0.2264, 1.9604]$&$[-0.9555, -0.1788]\cup[ 0.2352, 2.0086]$\\
              \hline
    \multicolumn{3}{| c |}{$l=6$}\\
            \hline
      $5$&$[-0.9206, -0.3176 ]\cup [0.2307, 1.9454]$&$[-0.9507, -0.1787]\cup [0.2413, 1.9964]$\\
          $8$&$[-0.8836, -0.3167]\cup[0.2254, 1.9623]$&$[-0.9581, -0.1787]\cup[0.2346, 2.0126]$\\    
          $10$&$[-0.8857, -0.3166]\cup[0.2251, 1.9663]$&$[-0.9600, -0.1785]\cup[0.2336, 2.0167]$\\
              \hline
  \end{tabular}   
\end{center}
\end{table}

In Table \ref{small_sigma_p3} we record the number of MINRES iterations required to reduce the 
\rbl{preconditioned} residual error to $10^{-6}$ for the case $\sigma=0.085$, with $p=3$ fixed and 
varying $M$  and $\ell$. In Tables \ref{large_sigma_p3} and \ref{large_sigma_p4} we record 
the number of iterations required when $\sigma=0.17$ with $p=3$ and $p=4$ fixed, respectively.
The timings were recorded running S-IFISS on  a MacBook Pro with 16Gb of memory
and a 2.3GHz  Intel Core i5 processor. We observe that  for a fixed value of $\sigma$, the iteration counts remain stable as the 
discretisation parameters $\ell$ and $p$ are varied.  \rbl{Moreover, the iteration counts stay 
bounded when working with values of $\nu$  arbitrarily close to $1/2$.}

\begin{table}[ht!]
 \caption[]{MINRES iteration counts for stopping tolerance $10^{-6}$, and timings in seconds (in parentheses), $\sigma =0.085$, $p=3.$  }
    \label{small_sigma_p3}
\begin{center}
    \begin{tabular}{ | l | l | l | l | l | l | l | l | l | l | l | }
      \hline   
    \multicolumn{1}{| c |}{$M$}&{$\nu=.4$ }& $\nu=.49$&$\nu=.499$&$\nu=.4999$&$\nu=.49999$  \\
    \hline
     \multicolumn{6}{| c |}{$l=5$}\\
    \hline
    $5$&$56 (3.9)$&$74 (5.3)$&$78 (5.7)$&$78 (5.7)$&$78 (5.8)$\\
      $8$&$56 (10)$&$75 (12.8)$&$78 (13.7)$&$79 (13.7)$&$79 (13.4)$\\
      $10$&$56 (16.5)$&$75 (22.5)$&$79 (23.6)$&$79 (23.4)$&$79 (23.5)$\\
              \hline
    \multicolumn{6}{| c |}{$l=6$}\\
            \hline
      $5$&$56(14.6)$&$75(19.7)$&$79(20.9)$&$79(20.5)$&$79(20.9)$\\
          $8$&$56(45.2)$&$75(60.5)$&$79(64.2)$&$79(63.8)$&$79(64)$\\   
           $10$&$56(86)$&$75(114.3)$&$79(120.5)$&$79(118.1)$&$79(117.3)$\\
              \hline
  \end{tabular}   
\end{center}
\end{table}

\begin{table}[ht!]
 \caption[]{MINRES iteration counts for stopping tolerance  $10^{-6}$, and timings in seconds (in parentheses), $\sigma =0.17$, $p=3$.  }
    \label{large_sigma_p3}
\begin{center}
    \begin{tabular}{ | l | l | l | l | l | l | l | l | l | l | l | }
       \hline   
    \multicolumn{1}{| c |}{$M$}&{$\nu=.4$ }& $\nu=.49$&$\nu=.499$&$\nu=.4999$&$\nu=.49999$  \\
    \hline
     \multicolumn{6}{| c |}{$l=5$}\\
    \hline
    $5$&$66(5.4)$&$86(6.3)$&$90(6.8)$&$92(6.9)$&$92(6.9)$\\
      $8$&$67(11.5)$&$88(15.4)$&$92(15.8)$&$93(16.4)$&$93(15.9)$\\
      $10$&$67(19.9)$&$88(27)$&$93(28.4)$&$93(28.1)$&$93(28.6)$\\
              \hline
    \multicolumn{6}{| c |}{$l=6$}\\
            \hline
      $5$&$66(18.3)$&$88(24.4)$&$92(25.5)$&$92(25.5)$&$92(25.5)$\\
          $8$&$67(55.4)$&$88(70.7)$&$93(75.8)$&$93(75.4)$&$93(76.6)$\\   
           $10$&$67(102.4)$&$89(134.9)$&$93(140.4)$&$95(145)$&$95(142)$\\
              \hline
  \end{tabular}   
\end{center}

\end{table}

\begin{table}[ht!]
 \caption[]{MINRES iteration counts for stopping tolerance $10^{-6}$, and timings in seconds (in parentheses),  $\sigma =0.17$, $p=4$.  }
    \label{large_sigma_p4}
\begin{center}
    \begin{tabular}{ | l | l | l | l | l | l | l | l | l | l | l |  }
        \hline   
    \multicolumn{1}{| c |}{$M$}&{$\nu=.4$ }& $\nu=.49$&$\nu=.499$&$\nu=.4999$&$\nu=.49999$  \\
    \hline
     \multicolumn{6}{| c |}{$l=5$}\\
    \hline
    $5$&$67(8.1)$&$90(11.6)$&$95(12)$&$95(12)$&$95(12)$\\
      $8$&$70(34.4)$&$93(44.4)$&$97(45.6)$&$98(48.9)$&$98(48.1)$\\
      $10$&$70(69.4)$&$93(94.1)$&$98(96.7)$&$98(96.5)$&$98(94.2)$\\
              \hline
    \multicolumn{6}{| c |}{$l=6$}\\
            \hline
      $5$&$69(39.9)$&$91(50.6)$&$95(54.7)$&$96(54.6)$&$96(53.6)$\\
          $8$&$70(176.8)$&$94(233.3)$&$98(249.4)$&$98(249.2)$&$98(250.6)$\\   
           $10$&$70(378.2)$&$94(513.9)$&$98(538.1)$&$98(538.7)$&$98(534.3)$\\
              \hline
  \end{tabular}   
\end{center}
\end{table}

\newpage 

\section{Conclusions}  
This work analyses  parameter-robust  discretizations and the construction of preconditioners for linear elasticity problems with uncertain material parameters. 
Having introduced a new three-field formulation of the problem, it is rigorously shown that preconditioners that are based on mapping properties associated with a specific 
parameter-dependent norm are robust with respect to variations of the \cpbl{Poisson ratio},  the choice of finite elements spaces  as well as the discretization parameters.  The theoretical results are confirmed by a  systematically designed set of numerical experiments. \cpbl{There are several generalizable aspects of our work. The idea of introducing an additional auxiliary variable to avoid working with parameter-dependent coefficients that exhibit rational nonlinearities is applicable to other PDE problems. The ideas underpinning the construction of the block diagonal preconditioner apply to other PDEs  with parameter-dependent coefficients where stochastic Galerkin approximation leads to a discrete problem with the structure (3.5). Finally, the preconditioning methodology gives a starting point for designing efficient solution algorithms  for more challenging (e.g., nonlinear) elasticity models with uncertain material coefficients.}


\bibliographystyle{siam}
\bibliography{kps_upd}
\end{document}